\newtheorem{thm}{\protect\theoremname}
\newtheorem{defn}[thm]{\protect\definitionname}
\newtheorem{lem}[thm]{\protect\lemmaname}
\newtheorem{cor}[thm]{\protect\corollaryname}
\newtheorem{example}[thm]{\protect\examplename}
\newtheorem{prop}[thm]{\protect\propname}
\newtheorem{rem}[thm]{\protect\remarkname}
\providecommand{\corollaryname}{Corollary}
\providecommand{\definitionname}{Definition}
\providecommand{\examplename}{Example}
\providecommand{\lemmaname}{Lemma}
\providecommand{\theoremname}{Theorem}
\providecommand{\propname}{Proposition}
\providecommand{\remarkname}{Remark}
\renewcommand\labelenumi{(\roman{enumi})}
\renewcommand\theenumi\labelenumi
\DeclareMathOperator{\Rank}{rank}
\DeclareMathOperator{\Span}{span}
\title{\LARGE \bf
Flatness of Two-Input Discrete-Time Systems and their Linearization
}
\author{Johannes Schrotshamer, Bernd Kolar and Markus Sch{\"o}berl
\thanks{This research was funded in whole, or in part, by the Austrian Science Fund (FWF) P36473. For the purpose of open access, the author has applied a CC BY public copyright license to any Author Accepted Manuscript version arising from this submission.}
\thanks{All authors are with the Institute of Control Systems, Johannes Kepler University Linz, Altenberger Strasse 69, 4040 Linz, Austria (e-mail: johannes.schrotshamer@jku.at, bernd.kolar@jku.at, markus.schoeberl@jku.at).}%
}
\begin{document}

\maketitle
\thispagestyle{empty}
\pagestyle{empty}

\begin{abstract}
In this contribution we discuss flat discrete-time nonlinear systems in a general setting including two special subclasses, namely, forward- and backward-flat systems. We relate rank conditions for certain submatrices of the Jacobian of the flat parameterization to the mentioned subclasses. Motivated by these rank conditions, for the case of two-input systems that possess an $(x,u)$-flat output, we derive a simple type of dynamic extension for the purpose of an exact linearization.
\end{abstract}

\begin{keywords}
Difference flatness, feedback linearization, discrete-time systems, nonlinear control systems
\end{keywords}

\section{INTRODUCTION}

    In the 1990s, Fliess, L{\'e}vine, Martin and Rouchon introduced the concept of flatness for nonlinear continuous-time systems (see e.g. \cite{FliessLevineMartinRouchon:1995} and \cite{FliessLevineMartinRouchon:1999}). Flat continuous-time systems possess the characteristic property that all system variables can be expressed by a flat output and its time derivatives. Due to its high practical relevance, flatness is doubtlessly one of the most important concepts in nonlinear control theory.
    Although certain subclasses of differentially flat systems can already be systematically checked for flatness (see e.g. \cite{NicolauRespondek:2017}, \cite{GstottnerKolarSchoberl:2021b}, \cite{Levine:2025}, or \cite{NicolauRespondekLi:2025}), a computationally efficient test in the form of verifiable necessary and sufficient conditions for nonlinear multi-input systems does currently not exist.
    
    For the practical application it has been shown e.g. in \cite{DiwoldKolarSchoberl:2022} that it can be advantageous to design a discrete-time flatness-based controller for a suitable discretization of the continuous-time system. Regarding the flatness of nonlinear discrete-time systems, in the literature there exist two approaches.
    The first one adopted e.g. in \cite{Sira-RamirezAgrawal:2004}, \cite{KaldmaeKotta:2013}, or \cite{KolarKaldmaeSchoberlKottaSchlacher:2016} is to simply replace the time derivatives of the continuous-time definition by forward-shifts. For these so-called forward-flat systems, a geometric test in the form of necessary and sufficient conditions has been derived in \cite{KolarDiwoldSchoberl:2023}.
    The second approach proposed in \cite{DiwoldKolarSchoberl:2020} considers flatness as the one-to-one correspondence of the system trajectories and trajectories of a trivial system. This method represents a generalization of the first approach since the flat output may not only depend on the system variables and their forward-shifts, but also on their backward-shifts (see also \cite{GuillotMillerioux2020} or \cite{Kaldmae:2021}). Although, in the recent paper \cite{Kaldmae:2021} necessary and sufficient conditions for the flatness of discrete-time systems using an algebraic framework are proposed, due to the occurrence of degrees of freedom and the need to solve partial differential equations, which leads to a high computational complexity, alternative methods are still being worked on.\\
    In the continuous-time case, it is well-known that every flat system can be exactly linearized by applying a suitable dynamic feedback, see e.g. \cite{FliessLevineMartinRouchon:1999}.
    Additionally, subclasses of flat systems that can be linearized by simple types of dynamic feedback like prolongations of the original or transformed input have been studied.
    Knowing that such an exact linearization by prolongations is possible has proven extremely helpful in deriving computationally efficient tests for the flatness of the corresponding system classes as e.g. in \cite{NicolauRespondek:2017}, \cite{GstottnerKolarSchoberl:2021b}, \cite{Levine:2025}, or \cite{GstoettnerKolarSchoeberl:2021}.\\
    In \cite{DiwoldKolarSchoberl:2020} it has been shown that also for flat discrete-time systems an exact linearization by a suitable dynamic feedback is always possible, see also \cite{KolarDiwoldGstottnerSchoeberl:2022} for a more detailed discussion with a focus on tracking control design. Further results regarding dynamic feedback linearization of discrete-time systems in general can be found e.g. in \cite{Aranda-BricaireMoog:2008} or \cite{KaldmaeKotta:2025}.
    As already mentioned above, for discrete-time systems so far only the property of forward-flatness can be checked in a computationally efficient manner, whereas the general case remains challenging. Inspired by the above mentioned results from the continuous-time case, as a first step towards a computationally feasible test for discrete-time flatness, in this contribution we show that flat discrete-time systems with two inputs and a certain type of flat output can be exactly linearized via a combination of prolongations and a similarly simple type of dynamic feedback which we shall refer to as ''prelongations''.
    For this purpose, we introduce the subclass of backward-flat systems, which --- analogously to the special case of forward-flat systems --- arises naturally from the definition of flatness when only backward-shifts occur both in the flat output and the flat parameterization of the system variables.
    Subsequently, we show that the properties of forward- and backward-flatness are related to rank conditions regarding certain submatrices of the Jacobian of the flat parameterization.
    Motivated by these rank conditions, for systems with two inputs and a certain type of flat output, we first prove that in the forward-flat case an exact linearization by prolongations of a suitably chosen transformed input is possible. This can be shown similarly as in \cite{GstottnerKolarSchoberl:2020} for the continuous-time setting. Second, we prove that in the backward-flat case an exact linearization by the above mentioned prelongations is possible. Finally, we show that systems that are neither forward- nor backward-flat still allow an exact linearization by a combination of these special types of dynamic feedback.

    The paper is organized as follows: In Section \ref{sec:flatness}, we first recall the concept of flatness for discrete-time systems as discussed in \cite{DiwoldKolarSchoberl:2020}. In addition to the subclass of forward-flat systems, we also introduce the subclass of backward-flat systems. Next, in Section \ref{sec:properties_Jacobian}, we relate rank conditions for certain submatrices of the Jacobian of the flat parameterization to the mentioned subclasses. Finally, motivated by these rank conditions, in Section \ref{sec:linearization}, for the case of two-input systems that possess an $(x,u)$-flat output we derive a simple type of dynamic extension for the purpose of an exact linearization and illustrate our results by an academic example and the well-known practical examples of a planar VTOL aircraft and a wheeled mobile robot.

    \paragraph*{Notation}
    To keep formulas short and readable, the one-fold forward-shift of a variable is sometimes denoted by a superscript +, while backward- and higher-order forward-shifts are generally represented using subscripts in brackets, e.g. $u_{[\alpha]}$ denotes the $\alpha$th forward-shift and $u_{[-\alpha]}$ denotes the $\alpha$th backward-shift of $u$ with $\alpha\geq0$.


\section{FLATNESS OF DISCRETE-TIME SYSTEMS}\label{sec:flatness}
    Throughout this contribution, we consider nonlinear time-invariant discrete-time systems of the form
	\begin{equation}\label{eq:sysEq}
		x^{i,+}=f^{i}(x,u)\,,\quad i=1,\dots,n
	\end{equation}
	with $\dim(x)=n$, $\dim(u)=m$ and smooth functions $f^{i}(x,u)$ with $\Rank (\partial_u f)=m$ (independent inputs). Additionally, we assume
    that the system equations \eqref{eq:sysEq} satisfy the submersivity condition $\Rank(\partial_{(x,u)}f)=n$, which is a common assumption in the discrete-time literature, as it is necessary for accessibility (see e.g. \cite{Grizzle:1993}).\\
    According to \cite{DiwoldKolarSchoberl:2020}, a discrete-time system \eqref{eq:sysEq} is flat if there exists a one-to-one correspondence between its solutions $(x(k),u(k))$ and solutions $y(k)$ of a trivial system (arbitrary trajectories that need not satisfy any difference equation) with the same number of inputs. The one-to-one correspondence of the solutions means that the values of $x(k)$ and $u(k)$ at some fixed time step $k$ may depend on an arbitrary but finite number of future and past values of $y(k)$. On the contrary, the value of $y(k)$ at some fixed time step $k$ may depend on an arbitrary but finite number of future and past values of $x(k)$ and $u(k)$.\\
    Because of the time-invariance of the system \eqref{eq:sysEq}, the one-to-one correspondence can be expressed by maps of the form
    \begin{equation*}\label{eq:one_to_one_xu}
        (x(k),u(k))=F(y(k-l_1),\ldots,y(k),\ldots,y(k+l_2))
    \end{equation*}
    and
    \begin{equation*}\label{eq:one_to_one_y}
        \begin{split}
            y(k)=&\varphi(x(k-p_1),u(k-p_1),\ldots,x(k),u(k),\ldots,\\
            &x(k+p_2),u(k+p_2))
        \end{split}
    \end{equation*}
    with suitable integers $l_1,l_2,p_1,p_2$.
    Through repeated application of \eqref{eq:sysEq}, all forward-shifts $x(k+\alpha), \, \alpha \geq 1$ are obviously determined by $x(k)$ and the input trajectory $u(k),\ldots,u(k+\alpha-1)$. A similar argument holds for the backward direction: Due to the submersivity condition $\Rank(\partial_{(x,u)}f)=n$, there always exist $m$ functions $g(x, u)$ such that the map
    \begin{equation}\label{eq:sysEq_ext}
		x^{+} = f(x, u)\,,\quad \zeta = g(x, u)
	\end{equation}
	is locally a diffeomorphism and hence invertible. By a repeated application of its inverse
    \begin{equation}\label{eq:sysEq_ext_inv}
		x=\psi_x(x^+,\zeta)\,, \quad u=\psi_u(x^+,\zeta)\,,
	\end{equation}
    all backward-shifts $x(k-\beta),\, u(k-\beta)$ of the state- and input variables for $\beta \geq 1$ are determined by $x(k)$ and backward-shifts $\zeta(k-1),\ldots,\zeta(k-\beta)$ of the system variables $\zeta$.\\
    Additionally, we make use of a space with coordinates $(\ldots,\zeta_{[-1]},x,u,u_{[1]},\ldots)$. Future values of a smooth function $h$ defined on this space can, in accordance with \eqref{eq:sysEq_ext}, be determined by a repeated application of the forward-shift operator $\delta$ defined by the rule
    \begin{equation*}
    \begin{split}
        & \delta(h(\ldots,\zeta_{[-2]},\zeta_{[-1]},x,u,u_{[1]},\ldots))=\\
        & \qquad h(\ldots,\zeta_{[-1]},g(x,u),f(x,u),u_{[1]},u_{[2]},\ldots)\,.
    \end{split}
    \end{equation*}
    Due to \eqref{eq:sysEq_ext_inv}, past values can in turn be determined by a repeated application of its inverse, the backward-shift operator $\delta^{-1}$ defined by\footnote{The forward- and backward-shift operators on a space with the coordinates $(\ldots,y_{[-1]},y,y_{[1]},\ldots)$ are defined accordingly.}
    \begin{equation*}
    \begin{split}
        & \delta^{-1}(h(\ldots,\zeta_{[-1]},x,u,u_{[1]},u_{[2]},\ldots))=\\
        & \qquad h(\ldots,\zeta_{[-2]},\psi_x(x,\zeta_{[-1]}),\psi_u(x,\zeta_{[-1]}),u,u_{[1]},\ldots)\,.
    \end{split}
    \end{equation*}
    With these preliminaries, we can give a concise definition of the concept of flatness for discrete-time systems, which is a modified but equivalent formulation of Definition 1 in \cite{DiwoldKolarSchoberl:2020}.
	\begin{defn}
		\label{def:flatness}The system \eqref{eq:sysEq} is said to be flat around an equilibrium $(x_{0},u_{0})$, if the $n+m$ coordinate functions $x$ and $u$ can be expressed locally by an $m$-tuple of functions
		\begin{equation}
			y^{j}=\varphi^{j}(\zeta_{[-Q_1,-1]},x,u,u_{[1,Q_2]})\,,\quad\,j=1,\ldots,m\label{eq:flat_output}
		\end{equation}
		and their backward- and forward-shifts up to some finite order\footnote{The multi-index $R_1=(r_{1,1},\ldots,r_{1,m})$ contains the number of backward-shifts and $R_2=(r_{2,1},\ldots,r_{2,m})$ contains the number of forward-shifts of the individual components of the flat output which are needed to express $x$ and $u$, and $y_{[-R_1,R_2]}$ is an abbreviation for $y$ and its backward-shifts up to order $R_1$ and its forward-shifts up to order $R_2$.}, i.e.
    	\begin{equation}\label{eq:flat_param}
    		\begin{split}
    			x&=F_{x}(y_{[-R_1,R_2-1]})\,,\\
    			\:u&=F_{u}(y_{[-R_1,R_2]})\,.
    		\end{split}
    	\end{equation}
        The $m$-tuple \eqref{eq:flat_output} is called a flat output and the parameterization \eqref{eq:flat_param} of $x$ and $u$ is unique.
	\end{defn}

    If we restrict ourselves to forward-shifts in the flat output \eqref{eq:flat_output} and the parameterization \eqref{eq:flat_param}, i.e. $Q_1=0$ and $R_1=0$, then Definition \ref{def:flatness} leads to the special case of forward-flatness.
	\begin{defn}
		\label{def:forward-flatness}The system \eqref{eq:sysEq} is said
		to be forward-flat around an equilibrium $(x_{0},u_{0})$, if the
		$n+m$ coordinate functions $x$ and $u$ can be expressed locally
		by an $m$-tuple of functions
		\begin{equation}
			y^{j}=\varphi^{j}(x,u,u_{[1,Q_2]})\,,\quad j=1,\ldots,m\label{eq:forward-flat_output}
		\end{equation}
		and their forward-shifts up to some finite order, i.e.
    	\begin{equation}\label{eq:flat_param_forward}
    		\begin{split}
    			x&=F_{x}(y_{[0,R_2-1]})\,,\\
    			\:u&=F_{u}(y_{[0,R_2]})\,.
    		\end{split}
    	\end{equation}
        The $m$-tuple \eqref{eq:forward-flat_output} is called a forward-flat output and the parameterization \eqref{eq:flat_param_forward} of $x$ and $u$ is unique.
	\end{defn}

    \begin{rem}
        As shown in \cite{KolarDiwoldSchoberl:2023}, forward-flatness can be systematically checked by computing a unique sequence of involutive distributions. This sequence generalizes the sequence of distributions from the static feedback linearization test in \cite{Grizzle:1986}, which is why the latter is included as a special case. It should be noted that there does not exist a counterpart in the continuous-time case.
    \end{rem}
	
	Conversely, if we only permit backward-shifts to appear in the flat output \eqref{eq:flat_output} and the parameterization \eqref{eq:flat_param}, i.e. $Q_2=0$ and $R_2=0$, then Definition \ref{def:flatness} leads to a subclass of flat discrete-time systems which we call backward-flat systems.
	\begin{defn}
		\label{def:backward-flatness}The system \eqref{eq:sysEq} is said
		to be backward-flat around an equilibrium $(x_{0},u_{0})$, if the
		$n+m$ coordinate functions $x$ and $u$ can be expressed locally
		by an $m$-tuple of functions
		\begin{equation}
			y^{j}=\varphi^{j}(\zeta_{[-Q_1,-1]},x,u)\,,\quad j=1,\ldots,m\label{eq:backward-flat_output}
		\end{equation}
		and their backward-shifts up to some finite order, i.e.
        \begin{equation}\label{eq:flat_param_back}
    		\begin{split}
    			x&=F_{x}(y_{[-R_1,-1]})\,,\\
    			\:u&=F_{u}(y_{[-R_1,0]})\,.
    		\end{split}
	    \end{equation}
        The $m$-tuple \eqref{eq:backward-flat_output} is called a backward-flat output and the parameterization \eqref{eq:flat_param_back} of $x$ and $u$ is unique.
	\end{defn}
    For flat systems according to Definition \ref{def:flatness}, Definition \ref{def:forward-flatness} and Definition \ref{def:backward-flatness}, with $R=R_1+R_2$ the inequality $\# R\geq n$ applies and the parameterizing map $\left(F_x, F_u\right): \mathbb{R}^{\# R+m} \rightarrow \mathbb{R}^{n+m}$ is a submersion.\footnote{The addition $R=R_1+R_2$ is considered element-wise as $R=(r_1,\ldots,r_m)=(r_{1,1}+r_{2,1},\ldots,r_{1,m}+r_{2,m})$ and $\# R=r_1+\ldots+r_m$.} We denote the difference of the dimensions of the domain and the co-domain of this map by $d$, i. e. $d=\# R+m-(n+m)=\# R-n$. The difference $d$ of a flat output $y$ is the minimal dimension of the state $z$ of a linearizing dynamic feedback, see Section \ref{sec:linearization}.
    \begin{rem}
        If $\# R=n$ holds, the parameterizing map $\left(F_x, F_u\right): \mathbb{R}^{n+m} \rightarrow \mathbb{R}^{n+m}$ is a diffeomorphism and the system can be exactly linearized by static feedback. We refer to these particular flat outputs as linearizing outputs.
    \end{rem}

    \section{The Jacobian Matrix of the Parameterizing Map} \label{sec:properties_Jacobian}
    In this section, we will show that for flat systems according to Definition \ref{def:flatness} with a flat output \eqref{eq:flat_output} (including the two special cases of forward- and backward-flatness), the submatrices $\partial_{y_{[-R_1]}}F_x$ and $\partial_{y_{[R_2]}}F_u$ of the Jacobian matrix
    \begin{equation}\label{eq:Jacobian1}
        \resizebox{.99 \hsize}{!}{
        $\displaystyle
            \partial_{y_{[-R_1, R_2]}} F_{x,u}=\left[\begin{array}{ccc}
    		\partial_{y_{[-R_1]}} F_x & \partial_{y_{[-R_1+1, R_2-1]}} F_x & \underline{0} \\
    		\partial_{y_{[-R_1]}} F_u & \partial_{y_{[-R_1+1, R_2-1]}} F_u & \partial_{y_{[R_2]}} F_u
    	\end{array}\right]
        $}
    \end{equation}
     of the parameterizing map \eqref{eq:flat_param} exhibit a distinct rank.\\
    Because of $\Rank (\partial_u f)=m$, it is always possible to apply an invertible input transformation as stated in the following lemma, which results in a special structure of the Jacobian matrix \eqref{eq:Jacobian1}.
    \begin{lem}\label{lem:Jacobian_transformed}
        Consider a system \eqref{eq:sysEq} that is flat according to Definition \ref{def:flatness} with a flat output \eqref{eq:flat_output}. After an invertible input transformation $v^j= f^j(x, u)$ such that $m$ of the $n$ system equations are normalized in the form $x^{j,+}=v^j$, the Jacobian of the corresponding parameterizing map $(x, v)=F_{x, v}\left(y_{\left[-R_1, R_2\right]}\right)$ has the form
        \begin{equation}\label{eq:Jacobian_transformed}
        \resizebox{.99 \hsize}{!}{
        $\displaystyle
            \partial_{y_{\left[-R_1, R_2\right]}} F_{x, v}=\left[\begin{array}{ccc}
            \partial_{y_{\left[-R_1\right]}} F_x & \partial_{y_{\left[-R_1+1, R_2-1\right]}} F_x & \underline{0} \\
            \underline{0} & \partial_{y_{\left[-R_1+1, R_2-1\right]}} F_v & \partial_{y_{\left[R_2\right]}} F_v
            \end{array}\right]\,.
        $}
        \end{equation}
    \end{lem}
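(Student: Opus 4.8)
The plan is to exploit the fact that the normalization $x^{j,+}=v^j$ identifies each new input with a one-step forward-shift of a state component, and then to simply read off the dependency structure of the resulting parameterization. First I would fix the input transformation: since $\Rank(\partial_u f)=m$, after a suitable reordering there are $m$ equations, say those singled out by a selection $j\mapsto\sigma(j)$, whose Jacobian $\partial_u f^{\sigma}$ is invertible. Setting $v^j=f^{\sigma(j)}(x,u)$ therefore defines, for fixed $x$, a local diffeomorphism in $u$ --- i.e. an invertible input transformation --- and by construction the selected system equations take the normalized form $x^{\sigma(j),+}=v^j$.

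Next I would observe that an input transformation leaves the parameterization of the state untouched, so $x=F_x(y_{[-R_1,R_2-1]})$ continues to hold. This immediately produces the entire top row of \eqref{eq:Jacobian_transformed}: the two blocks $\partial_{y_{[-R_1]}}F_x$ and $\partial_{y_{[-R_1+1,R_2-1]}}F_x$, together with the zero block in the top-right corner, which merely reflects that $F_x$ does not depend on the highest forward-shift $y_{[R_2]}$.

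The crucial step concerns the new input. Combining the normalization $v^j(k)=x^{\sigma(j)}(k+1)$ with the state parameterization and shifting forward by one step gives $v^j=\delta\bigl(F_x^{\sigma(j)}\bigr)$, which, viewed as a function of the flat output and its shifts, depends only on $y_{[-R_1+1,R_2]}$. Applying $\delta$ raises every shift index by one, so the lowest backward-shift $y_{[-R_1]}$ drops out of $F_v$ while the highest forward-shift $y_{[R_2]}$ now enters. Hence $\partial_{y_{[-R_1]}}F_v=\underline{0}$, which is exactly the zero block in the bottom-left of \eqref{eq:Jacobian_transformed}, whereas the remaining blocks $\partial_{y_{[-R_1+1,R_2-1]}}F_v$ and $\partial_{y_{[R_2]}}F_v$ stay unconstrained.

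I do not expect a genuine obstacle, as the argument is elementary once the identification $v^j=x^{\sigma(j),+}$ has been made. The only point requiring care is the bookkeeping of the shift indices --- in particular verifying that forward-shifting $F_x$ cleanly removes the $y_{[-R_1]}$-dependence without reintroducing it through any other term --- and confirming that the claimed form is consistent with the uniqueness of the parameterization guaranteed by Definition \ref{def:flatness}.
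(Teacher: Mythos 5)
Your proposal is correct and follows essentially the same route as the paper: the paper's one-line proof also rests on the observation that the normalization $x^{j,+}=v^j$ forces $F_v$ to be a forward-shift of components of $F_x$, hence independent of the highest backward-shifts $y_{[-R_1]}$. You merely spell out the shift-index bookkeeping ($v^j=\delta(F_x^{\sigma(j)})$ depends only on $y_{[-R_1+1,R_2]}$) that the paper declares obvious, which is a welcome clarification but not a different argument.
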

    \begin{proof}
        Because of $x^{j,+}=v^j$, it is obvious that $F_v$ cannot depend on the highest backward-shifts $y_{[-R_1]}$ that occur in $F_x$. Thus, $\partial_{y_{[-R_1]}}F_v=\underline{0}$ holds.
    \end{proof}
    Since the flat output \eqref{eq:flat_output} of a flat system, as defined in Section \ref{sec:flatness}, is in general a function of the coordinates $(\zeta_{[-Q_1,-1]},x,u,u_{[1,Q_2]})$, with \eqref{eq:sysEq_ext} it is obvious that the variables $y_{[-R_1,R_2]}$ can be expressed as functions of $F_x$, $F_u$, backward-shifts $\delta^{-Q_1-R_1}g(F_x,F_u),\ldots,\delta^{-1}g(F_x,F_u)$ and forward-shifts $\delta F_u,\ldots,\delta^{Q_2+R_2} F_u$. This implies
    \begin{equation}\label{eq:differentials}
        \begin{split}
            \Span & \{\mathrm{d}y_{[-R_1,R_2]}\} \subset \Span \{\delta^{-M_1}\mathrm{d}g(F_x,F_u),\ldots,\\
            &\delta^{-1}\mathrm{d}g(F_x,F_u),\mathrm{d}F_x,\mathrm{d}F_u,\delta\mathrm{d}F_u,\ldots,\delta^{M_2}\mathrm{d}F_u\}
        \end{split}
    \end{equation}
    with some minimal $M_1\leq Q_1+R_1$ and some minimal $M_2\leq Q_2+R_2$.\\
    With these results, we can state the following proposition.
    \begin{prop}\label{prop:differentials}
        The minimal multi-indices $M_1$ and $M_2$ in \eqref{eq:differentials} are related to the submatrices $\partial_{y_{[R_2]}}F_u$ and $\partial_{y_{[-R_1]}}F_x$ as follows:
        \begin{enumerate}
            \item The submatrix $\partial_{y_{[R_2]}}F_u$ is of full rank, i.e. $\Rank (\partial_{y_{[R_2]}}F_u)=m$ if and only if $M_2=0$.
            \item The submatrix $\partial_{y_{[-R_1]}}F_x$ is of full rank, i.e. $\Rank (\partial_{y_{[-R_1]}}F_x)=m$ if and only if $M_1=0$.
        \end{enumerate}
    \end{prop}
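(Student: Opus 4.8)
The plan is to prove both equivalences by analyzing the span relation \eqref{eq:differentials} together with the structure of the parameterizing map, exploiting the duality between the forward- and backward-shift operators $\delta$ and $\delta^{-1}$. I will treat the two statements symmetrically, since statement (i) concerns the highest \emph{forward}-shift $y_{[R_2]}$ appearing in $F_u$ and the forward-direction bound $M_2$, while statement (ii) concerns the highest \emph{backward}-shift $y_{[-R_1]}$ appearing in $F_x$ and the backward-direction bound $M_1$. Because the discrete-time setting is invertible via \eqref{eq:sysEq_ext_inv}, one direction can be obtained from the other by reversing time, so I would prove (i) in detail and then indicate that (ii) follows by the dual argument with $\delta\leftrightarrow\delta^{-1}$, $f\leftrightarrow\psi$, and forward- $\leftrightarrow$ backward-shifts interchanged.

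For statement (i), I would first establish the implication $M_2=0 \Rightarrow \Rank(\partial_{y_{[R_2]}}F_u)=m$ by a contradiction/counting argument. If $M_2=0$, then \eqref{eq:differentials} says that $\mathrm{d}y_{[-R_1,R_2]}$ lies in the span generated \emph{without} any genuine forward-shifts $\delta^{\alpha}\mathrm{d}F_u$ for $\alpha\geq1$. The key observation is that among all the variables $y_{[-R_1,R_2]}$, the highest forward-shift $y_{[R_2]}$ can enter the generators only through $\mathrm{d}F_u$ itself (the term $\partial_{y_{[R_2]}}F_u$ in \eqref{eq:Jacobian1}), since $F_x$ does not depend on $y_{[R_2]}$ and, by $M_2=0$, no higher shift $\delta^{\alpha}\mathrm{d}F_u$ is available to supply it. Writing $\mathrm{d}y_{[R_2]}$ as a combination of the generators and isolating the coefficient of $\mathrm{d}y_{[R_2]}$ inside $\mathrm{d}F_u$, one is forced to conclude that $\partial_{y_{[R_2]}}F_u$ must be full rank $m$; otherwise $\mathrm{d}y_{[R_2]}$ (which has $m$ independent components) could not be recovered. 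Conversely, for $\Rank(\partial_{y_{[R_2]}}F_u)=m \Rightarrow M_2=0$, I would use that full rank of $\partial_{y_{[R_2]}}F_u$ means $\mathrm{d}F_u$ already captures the direction $\mathrm{d}y_{[R_2]}$ algebraically at the current step; invoking the normalized structure from Lemma \ref{lem:Jacobian_transformed} (after the input transformation $v=f(x,u)$, the block $\partial_{y_{[R_2]}}F_v$ sits alone in its row), no forward-shift of $\mathrm{d}F_v$ is needed to express any $\mathrm{d}y_{[\alpha]}$ with $\alpha\leq R_2$, so the minimal $M_2$ collapses to $0$.

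The core technical device throughout is to track, for each generator appearing on the right-hand side of \eqref{eq:differentials}, the \emph{highest} shift of $y$ it can possibly involve, and then match these against the left-hand side $\mathrm{d}y_{[R_2]}$. I expect the main obstacle to be making the ''only if'' direction of (i) fully rigorous, namely arguing that a forward-shift $\delta F_u$ (i.e. $M_2\geq1$) is genuinely \emph{necessary} to express $y_{[R_2]}$ precisely when $\partial_{y_{[R_2]}}F_u$ drops rank. This requires carefully separating which of the generators $\mathrm{d}F_x,\mathrm{d}F_u,\delta\mathrm{d}F_u,\ldots$ can contribute the component $\mathrm{d}y_{[R_2]}$: the subtlety is that $\delta\mathrm{d}F_u$ contains $\partial_{y_{[R_2]}}F_u$ shifted forward, which involves $y_{[R_2+1]}$ and higher, and must be shown \emph{not} to help recover $y_{[R_2]}$ itself without already having used $\mathrm{d}F_u$ — so a clean bookkeeping of shift orders, ideally organized by applying $\delta^{-1}$ to reduce everything to a common lowest shift, is what closes the gap.

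For statement (ii), the argument is entirely analogous under the time-reversal symmetry: the highest backward-shift $y_{[-R_1]}$ can enter the generators only through $\mathrm{d}F_x$ via the block $\partial_{y_{[-R_1]}}F_x$, so full rank of this block is equivalent to never needing a backward-shift $\delta^{-1}\mathrm{d}g(F_x,F_u)$ beyond order $0$, i.e. $M_1=0$. Here the companion of Lemma \ref{lem:Jacobian_transformed} is the observation that $F_u$ (equivalently $F_v$) cannot depend on the highest backward-shift $y_{[-R_1]}$, which is exactly the structural zero $\partial_{y_{[-R_1]}}F_v=\underline{0}$ already exploited in that lemma; this isolates $\partial_{y_{[-R_1]}}F_x$ as the sole carrier of the $\mathrm{d}y_{[-R_1]}$ direction and lets the same rank/counting argument go through verbatim.
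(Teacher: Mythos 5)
Your overall strategy coincides with the paper's: track which generators on the right-hand side of \eqref{eq:differentials} can carry the extremal differentials $\mathrm{d}y_{[R_2]}$ and $\mathrm{d}y_{[-R_1]}$, and use the normalization of Lemma \ref{lem:Jacobian_transformed} to isolate $\partial_{y_{[-R_1]}}F_x$ in the backward case. The directions ``$M_2=0\Rightarrow$ full rank'' and ``$M_1=0\Rightarrow$ full rank'' are essentially complete as you state them. Two points remain open, though. First, for ``full rank $\Rightarrow M_2=0$'' you correctly identify the obstacle but do not close it. The missing step is that the coefficient of the top differential $\mathrm{d}y_{[R_2+\alpha]}$ in $\delta^{\alpha}\mathrm{d}F_u$ is precisely the $\alpha$-fold forward shift of $\partial_{y_{[R_2]}}F_u$, which remains regular in a neighborhood of the equilibrium; hence any linear combination that actually uses $\delta^{\alpha}\mathrm{d}F_u$ with $\alpha\geq1$ necessarily introduces differentials $\mathrm{d}y_{[R_2+\alpha]}$ that cannot cancel and do not occur on the left-hand side of \eqref{eq:differentials}. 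Those generators are therefore superfluous and the minimal $M_2$ is $0$. Your formulation (``$\mathrm{d}F_u$ already captures the direction $\mathrm{d}y_{[R_2]}$'') does not by itself rule out that a smaller generating set without $\mathrm{d}F_u$ but with $\delta\mathrm{d}F_u$ could be needed, which is exactly what minimality of $M_2$ is about.

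Second, and more substantively, the claim that (ii) follows ``verbatim'' by time reversal hides the one genuinely asymmetric ingredient. The backward generators in \eqref{eq:differentials} are $\delta^{-\beta}\mathrm{d}g(F_x,F_u)$, \emph{not} backward shifts of $\mathrm{d}F_x$, so the hypothesis $\Rank(\partial_{y_{[-R_1]}}F_x)=m$ does not directly control the top-shift coefficients $\delta^{-\beta}(\partial_{y_{[-R_1]}}g(F_x,F_u))$ of those generators. The paper bridges this with Proposition \ref{prop:ranks_matrices}: because \eqref{eq:sysEq_ext} is a diffeomorphism and $f(F_x,F_u)=\delta F_x$ does not depend on $y_{[-R_1]}$, the chain rule yields $\Rank(\partial_{y_{[-R_1]}}g(F_x,F_u))=\Rank(\partial_{y_{[-R_1]}}F_x)$. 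Without this rank-transfer lemma the implication ``$\Rank(\partial_{y_{[-R_1]}}F_x)=m\Rightarrow M_1=0$'' does not go through, so you need to state and prove it rather than appeal to a $\delta\leftrightarrow\delta^{-1}$ duality; the remainder of your backward argument (the structural zero $\partial_{y_{[-R_1]}}F_v=\underline{0}$ for the converse direction) is fine and matches the paper.
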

    
    \begin{proof}
        \\(i) Direction $\Rank (\partial_{y_{[R_2]}}F_u)=m\,\Rightarrow\,M_2=0$: For arbitrary $\alpha\geq0$, the $\alpha$th forward-shift of the 1-forms $\mathrm{d}F_u$ has the form 
        \begin{equation}\label{eq:differential_forward}
    		\delta^{\alpha}\mathrm{d} F_u=\ldots+\delta^{\alpha} (\partial_{y_{[R_2+\alpha]}} F_u) \mathrm{d} y_{[R_2+\alpha]}\,,
    	\end{equation}
        where the coefficients of the differentials $\mathrm{d} y_{[R_2+\alpha]}$ of the highest occurring forward-shifts of the flat output are given by the elements of the $\alpha$th forward-shift of the matrix $\partial_{y_{[R_2]}} F_u$. Thus, if $\partial_{y_{[R_2]}} F_u$ is regular\footnote{In a sufficiently small neighborhood of an equilibrium, applying the shift-operator does not change the rank of the Jacobian matrix.}, then for every $\alpha \ge 0$ there does not exist a linear combination of the 1-forms $\delta^{\alpha} \mathrm{d} F_u$ where the differentials $\mathrm{d} y_{[R_2 + \alpha]}$ cancel out. Hence, the condition \eqref{eq:differentials} must hold also without 1-forms \eqref{eq:differential_forward} with $\alpha\geq1$, and $M_2=0$ follows.

        Direction $M_2=0\,\Rightarrow\,\Rank (\partial_{y_{[R_2]}}F_u)=m$: For $M_2=0$, the $m$ differentials $\mathrm{d}y_{[R_2]}$ of the left-hand side of \eqref{eq:differentials} only appear in the $m$ 1-forms $\mathrm{d}F_u$ of the right-hand side of \eqref{eq:differentials}, with their coefficients given by the elements of the Jacobian $\partial_{y_{[R_2]}}F_u$. Thus, \eqref{eq:differentials} can only hold if $\Rank (\partial_{y_{[R_2]}}F_u)=m$.

        (ii) Direction $\Rank (\partial_{y_{[-R_1]}}F_x)=m\,\Rightarrow\,M_1=0$: Because of Proposition \ref{prop:ranks_matrices} (see the appendix), we can equivalently show that $\Rank (\partial_{y_{[-R_1]}}g(F_x,F_u))=m$ implies $M_1=0$. 
        For arbitrary $\beta\geq0$, the $\beta$th backward-shift of the 1-forms $\mathrm{d}g(F_x,F_u)$ has the form 
        \begin{equation}\label{eq:differential_backward}
        \resizebox{.99 \hsize}{!}{
        $\displaystyle
    		\delta^{-\beta} \mathrm{d} g\left(F_{x}, F_u\right)=\delta^{-\beta}(\partial_{y_{[-R_1-\beta]}} g\left(F_{x}, F_u\right)) \mathrm{d} y_{[-R_1-\beta]}+\ldots\,,
        $}
    	\end{equation}
        where the coefficients of the differentials $\mathrm{d} y_{[-R_1-\beta]}$ of the highest occurring backward-shifts of the flat output are given by the elements of the $\beta$th backward-shift of the matrix $\partial_{y_{[-R_1]}} g(F_x,F_u)$. Thus, if $\partial_{y_{[-R_1]}} g(F_x,F_u)$ is regular (see footnote 4), then for every $\beta \ge 0$ there does not exist a linear combination of the 1-forms $\delta^{-\beta} \mathrm{d} g(F_x,F_u)$ where the differentials $\mathrm{d} y_{[-R_1 - \beta]}$ cancel out. Hence, \eqref{eq:differentials} must hold also without 1-forms \eqref{eq:differential_backward} with $\beta\geq1$, and $M_1=0$ follows.
        
        Direction $M_1=0\,\Rightarrow\,\Rank (\partial_{y_{[-R_1]}}F_x)=m$: With the regular input transformation from Lemma \ref{lem:Jacobian_transformed}, the Jacobian matrix \eqref{eq:Jacobian1} is of the form \eqref{eq:Jacobian_transformed}.
        For $M_1=0$, the $m$ differentials $\mathrm{d}y_{[-R_1]}$ of the left-hand side of \eqref{eq:differentials} then only appear in the $m$ 1-forms $\mathrm{d}F_x$ of the right-hand side of \eqref{eq:differentials}, with their coefficients given by the elements of the Jacobian $\partial_{y_{[-R_1]}}F_x$. Therefore, \eqref{eq:differentials} can only hold if $\Rank (\partial_{y_{[-R_1]}}F_x)=m$. The fact that the above-mentioned input transformation does not affect the parameterization $F_x$ of the state variables completes the proof.
    \end{proof}
    
    The following Corollary is an immediate consequence of Proposition \ref{prop:differentials}.
    \begin{cor}\label{cor:flatness_rank}
        Consider a system \eqref{eq:sysEq} that is flat according to Definition \ref{def:flatness} with a flat output \eqref{eq:flat_output} and the corresponding parameterizing map \eqref{eq:flat_param}. For the special cases of forward- and backward-flatness, the following applies:
        \begin{enumerate}
            \item If the system is forward-flat according to Definition \ref{def:forward-flatness} with a forward-flat output \eqref{eq:forward-flat_output}, then $\Rank (\partial_{y_{[-R_1]}}F_x)=m$ with $R_1=0$.
            \item If the system is backward-flat according to Definition \ref{def:backward-flatness} with a backward-flat output \eqref{eq:backward-flat_output}, then $\Rank (\partial_{y_{[R_2]}}F_u)=m$ with $R_2=0$.
        \end{enumerate}
    \end{cor}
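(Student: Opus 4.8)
The plan is to obtain both claims directly from Proposition~\ref{prop:differentials} by checking that the defining restrictions of forward- and backward-flatness force the minimal multi-indices $M_1$ and $M_2$ in \eqref{eq:differentials} to vanish. Since Proposition~\ref{prop:differentials} already supplies the equivalences $\Rank(\partial_{y_{[R_2]}}F_u)=m \Leftrightarrow M_2=0$ and $\Rank(\partial_{y_{[-R_1]}}F_x)=m \Leftrightarrow M_1=0$, it remains only to argue $M_1=0$ in case~(i) and $M_2=0$ in case~(ii), after which the rank statements follow with no further work.

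For (i) I would start from the fact that a forward-flat output \eqref{eq:forward-flat_output} contains no backward shifts, so $Q_1=0$, and that its parameterization \eqref{eq:flat_param_forward} has $R_1=0$. Recalling from the derivation preceding \eqref{eq:differentials} that the minimal index obeys $M_1\le Q_1+R_1$, while $M_1$ is a nonnegative multi-index counting the backward shifts of $\mathrm{d}g(F_x,F_u)$ that appear on the right-hand side, the bound $Q_1+R_1=0$ leaves no room: the list $\delta^{-Q_1-R_1}\mathrm{d}g(F_x,F_u),\ldots,\delta^{-1}\mathrm{d}g(F_x,F_u)$ is empty, so $\Span\{\mathrm{d}y_{[-R_1,R_2]}\}$ is already contained in the span of $\mathrm{d}F_x$, $\mathrm{d}F_u$ and forward shifts of $\mathrm{d}F_u$, whence $M_1=0$. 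Proposition~\ref{prop:differentials}(ii) then yields $\Rank(\partial_{y_{[-R_1]}}F_x)=m$ with $R_1=0$.

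For (ii) the argument is dual. A backward-flat output \eqref{eq:backward-flat_output} contains no forward shifts, so $Q_2=0$, and its parameterization \eqref{eq:flat_param_back} has $R_2=0$. The minimal index therefore satisfies $0\le M_2\le Q_2+R_2=0$, forcing $M_2=0$; equivalently, the forward-shifted terms $\delta\,\mathrm{d}F_u,\ldots,\delta^{M_2}\mathrm{d}F_u$ drop out of the right-hand side of \eqref{eq:differentials}. Proposition~\ref{prop:differentials}(i) then gives $\Rank(\partial_{y_{[R_2]}}F_u)=m$ with $R_2=0$.

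The proof is essentially bookkeeping, so I do not anticipate a genuine obstacle. The only point needing care is the interplay of the two bounds in \eqref{eq:differentials}: one must confirm that $M_1\le Q_1+R_1$ and $M_2\le Q_2+R_2$ use exactly the shift orders $R_1,R_2$ of the flat parameterization at hand, and that a vanishing upper bound indeed pins the \emph{minimal} index to zero rather than merely making it small. Once this is granted, the corollary is immediate.
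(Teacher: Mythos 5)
Your argument is correct and is exactly the route the paper intends: the paper states the corollary as an ``immediate consequence'' of Proposition~\ref{prop:differentials}, and your unpacking --- $Q_1=R_1=0$ forces $M_1=0$ in the forward-flat case, $Q_2=R_2=0$ forces $M_2=0$ in the backward-flat case, then apply the respective equivalence --- is precisely that consequence. No gaps.
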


    Furthermore, from the proof of Proposition \ref{prop:differentials}, the following three additional important conclusions can be drawn.\\
    First, consider the forward-flat case according to Definition \ref{def:forward-flatness} with a forward-flat output \eqref{eq:forward-flat_output}, where \eqref{eq:differentials} reduces to 
    \begin{equation*}
            \Span \{\mathrm{d}y_{[0,R_2]}\} \subset \Span \{\mathrm{d}F_x,\mathrm{d}F_u,\delta\mathrm{d}F_u,\ldots,\delta^{M_2}\mathrm{d}F_u\}
    \end{equation*}
    with $R_1=0$ and $M_1=0$. If $M_2>0$, then the matrix $\partial_{y_{[R_2]}}F_u$ is necessarily singular.\footnote{$M_2>0$ means that at least one element of $M_2$ is not zero.}\\
    Second, in the backward-flat case according to Definition \ref{def:backward-flatness} with a backward-flat output \eqref{eq:backward-flat_output}, where \eqref{eq:differentials} reduces to 
    \begin{equation*}
    \begin{split}
        \Span \{\mathrm{d}y_{[-R_1,0]}\} \subset & \Span \{\delta^{-M_1}\mathrm{d}g(F_x,F_u),\ldots,\\
        &\delta^{-1}\mathrm{d}g(F_x,F_u),\mathrm{d}F_x,\mathrm{d}F_u\}
    \end{split}
    \end{equation*}
    with $R_2=0$ and $M_2=0$. If $M_1>0$, then the matrix $\partial_{y_{[-R_1]}}F_x$ cannot have full rank.\\
    If in the general case according to Definition \ref{def:flatness} both $M_1>0$ and $M_2>0$, it immediately follows that both matrices $\partial_{y_{[R_2]}}F_u$ and $\partial_{y_{[-R_1]}}F_x$ cannot have full rank. These rank defects prevent an exact linearization via static feedback. However, the location of the matrices $\partial_{y_{[R_2]}}F_u$ and $\partial_{y_{[-R_1]}}F_x$ in the Jacobian \eqref{eq:Jacobian1} suggests that these rank defects can be resolved by a particularly simple type of dynamic feedback. In the following section, this will be analyzed in detail for systems with two inputs and flat outputs of the form $y=\varphi(x,u)$, which we refer to as $(x,u)$-flat outputs.


    \section{LINEARIZATION OF TWO-INPUT SYSTEMS}\label{sec:linearization}
    In this section, we consider systems of the form \eqref{eq:sysEq} with two inputs and an $(x,u)$-flat output.
    We will show that they can be exactly linearized by a dynamic feedback consisting of shift-chains, which are attached either in the form of forward-shifts to a suitably chosen transformed input or in the form of backward-shifts to a suitably chosen function of the system variables. These two kinds of dynamic feedback will be referred to as prolongations and prelongations, respectively.
    

	\subsection{Linearization of two-input Forward-Flat Systems}
    In this section, we prove that forward-flat systems according to Definition \ref{def:forward-flatness} with $m=2$ and an $(x,u)$-flat output become exactly linearizable by static feedback via prolongations of a suitably chosen transformed input.
	\begin{prop}\label{prop:prolongation}
        Every two-input forward-flat system \eqref{eq:sysEq} with a forward-flat output of the form $y^j=\varphi^j(x,u)$ can be rendered static feedback linearizable by a $d$-fold ($d=\# R_2-n$) prolongation
		\begin{equation*}
			\begin{aligned}
				\Sigma^{(d)}: x^{i,+} & =f^i(x, \hat{\Phi}_u(x, \bar{u})) \\
				\bar{u}^{1,{+}} & =\bar{u}_{[1]}^1 \\
				& \vdots \\
				\bar{u}^{1,{+}}_{[d-1]} & =\bar{u}_{[d]}^1
			\end{aligned}
		\end{equation*}
		with the state $[x^1, \ldots, x^n, \bar{u}^1, \ldots, \bar{u}_{[d-1]}^1]^{T}$, the input $(\bar{u}^1_{[d]},\bar{u}^2)$, and an invertible input transformation $u=\hat{\Phi}_u(x,\bar{u})$.
	\end{prop}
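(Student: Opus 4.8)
The plan is to show that the forward-flat output $y$ becomes a \emph{linearizing} output of the prolonged system $\Sigma^{(d)}$; once the parameterizing map of $\Sigma^{(d)}$ is shown to be a diffeomorphism, static feedback linearizability follows directly from the remark after Definition \ref{def:backward-flatness}. Throughout I would use that for a forward-flat output $R_1=0$, so $R=R_2=(r_{2,1},r_{2,2})$ with $\# R_2=n+d$, and that by Corollary \ref{cor:flatness_rank}(i) the block $\partial_{y_{[0]}}F_x=\partial_y F_x$ has full column rank $m=2$. Since the original system is flat with output $y$ and a prolongation only appends an integrator chain driven by the new input $\bar u^1_{[d]}$, the output $y$ remains a flat output of $\Sigma^{(d)}$; the whole argument therefore reduces to showing that its difference drops to zero, i.e. $\#\tilde R_2=n+d$.

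First I would fix the bookkeeping of the shift orders. Ordering the components so that $r_{2,1}\ge r_{2,2}$, the chain belonging to $y^1$ is the one to be prolonged. A short index count (state shifts of $y^i$ reach $r_{2,i}-1$, input shifts reach $r_{2,i}$) shows that, for $y$ to be a linearizing output of an $(n+d)$-dimensional system, the controllability indices must be $(\tilde\kappa_1,\tilde\kappa_2)=(r_{2,1},r_{2,2})$, whose sum $r_{2,1}+r_{2,2}=n+d$ now matches the augmented state dimension. This dictates the construction: the transformed input $\bar u^1$ must sit at order $s=r_{2,1}-d=n-r_{2,2}\ge 0$ in the $y^1$-chain, so that the prolongation states $\bar u^1_{[0,d-1]}$ fill precisely the shifts $y^1_{[s]},\ldots,y^1_{[r_{2,1}-1]}$ while the new input $\bar u^1_{[d]}$ lands at the top shift $y^1_{[r_{2,1}]}$. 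That a single chain suffices is anticipated by differentiating the identity $y=\varphi(F_x,F_u)$ with respect to the highest shifts, which yields $\partial_u\varphi\,\partial_{y_{[R_2]}}F_u=0$; hence the defect of the (by the discussion after Corollary \ref{cor:flatness_rank}, singular when $M_2>0$) matrix $\partial_{y_{[R_2]}}F_u$ is one-dimensional.

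Next I would construct the invertible input transformation $u=\hat\Phi_u(x,\bar u)$ so that, expressed through $y$, the component $\bar u^1$ depends on $y^1$ only up to shift $s$ whereas $\bar u^2$ retains the leading shift $y^2_{[r_{2,2}]}$; building $\hat\Phi_u$ from the triangular structure \eqref{eq:Jacobian_transformed} of Lemma \ref{lem:Jacobian_transformed} lets one absorb the deficient direction into $\bar u^1$ and decouple the two chains at their top shifts. Prolonging $\bar u^1$ by $d$ integrators then raises the leading block of the parameterization of $\Sigma^{(d)}$ to the square matrix
\begin{equation*}
\partial_{y_{[\tilde R_2]}}\tilde F_{\bar u}=\begin{bmatrix}\partial_{y^1_{[r_{2,1}]}}\bar u^1_{[d]} & \partial_{y^2_{[r_{2,2}]}}\bar u^1_{[d]}\\ \partial_{y^1_{[r_{2,1}]}}\bar u^2 & \partial_{y^2_{[r_{2,2}]}}\bar u^2\end{bmatrix}.
\end{equation*}
Finally I would check that the block Jacobian of the parameterizing map of $\Sigma^{(d)}$ — carrying $\partial_y F_x$ (rank $2$) in one corner and the above $2\times2$ block in the other — is regular; this gives $\#\tilde R_2=n+d$ equal to the augmented state dimension, so the map is a diffeomorphism and $\Sigma^{(d)}$ is static feedback linearizable.

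The main obstacle is the third step: proving that $\hat\Phi_u$ can be chosen so that prolonging the \emph{single} transformed input $\bar u^1$ exactly $d$ times removes the rank defect entirely, i.e. that the displayed $2\times2$ matrix is regular. This is precisely where $m=2$ enters, as the one-dimensional defect direction can be aligned with the single prolonged chain $\bar u^1$, whereas for $m>2$ several chains of differing lengths would be needed. I expect this to mirror the continuous-time argument of \cite{GstottnerKolarSchoberl:2020}, with the forward-shift operator $\delta$ playing the role of the time derivative.
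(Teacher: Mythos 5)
Your skeleton matches the paper's: you pick the same transformed input ($\bar u^1=y^1_{[s]}$ with $s=r_{2,1}-d$, which is exactly the relative degree $\rho_1$ of $y^1$), the same index relations ($\rho_1+r_{2,2}=n$, equal excess $d$ on both chains), and the same target of turning $y$ into a linearizing output of $\Sigma^{(d)}$. But the step you yourself flag as ``the main obstacle'' --- showing that this $\hat\Phi_u$ together with the $d$-fold prolongation of the single component $\bar u^1$ actually removes the rank defect --- is precisely the substance of the paper's proof, and you do not supply it. The paper closes it not by examining a $2\times2$ corner block but by establishing the full triangular structure of the shift chains \eqref{eq:proof_prop_prolonged}: after setting $\bar u^1=\varphi^1_{[\rho_1]}(x,u)$, $\bar u^2=u^2$, one argues that $\bar u^2$ cannot appear in $\varphi^2_{[s]}$ for $\rho_2\le s<r_{2,2}$ (otherwise $\varphi^2_{[s+1]},\dots,\varphi^2_{[r_{2,2}]}$ would involve forward-shifts of $\bar u^2$ and be useless for expressing $x$ and $\bar u$), that it must appear in $\varphi^2_{[r_{2,2}]}$, and that minimality of $r_{2,1}$ forces $r_{2,1}-\rho_1=r_{2,2}-\rho_2$. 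The dimension count \eqref{eq:dim_argument_forward} combined with the functional independence of the shifts of a flat output then shows that \eqref{eq:proof_prop_prolonged} is a diffeomorphism, namely the inverse of the parameterizing map of the prolonged system.

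Beyond the missing core step, two details of your plan would need repair. First, your closing criterion --- regularity of $\partial_yF_x$ and of the displayed $2\times2$ top block --- does not by itself imply regularity of the full $(n+d+m)\times(n+d+m)$ Jacobian of the prolonged parameterizing map; it is the triangular chain structure of \eqref{eq:proof_prop_prolonged} that makes invertibility immediate. Second, the selection rule ``order so that $r_{2,1}\ge r_{2,2}$ and prolong that chain'' is not the operative one: since $r_{2,1}-\rho_1=r_{2,2}-\rho_2=d$, both chains carry the same excess, and what matters is that $(\varphi^1_{[\rho_1]}(x,u),u^2)$ is an invertible function of $u$ for fixed $x$, secured by permuting $u^1,u^2$ rather than by comparing $r_{2,1}$ and $r_{2,2}$. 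Likewise, your argument that $\partial_u\varphi\,\partial_{y_{[R_2]}}F_u=0$ forces a one-dimensional defect gives nothing when both relative degrees are positive (then $\varphi$ is independent of $u$); the bound $\Rank(\partial_{y_{[R_2]}}F_u)\ge1$ instead follows from the minimality of $R_2$.
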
 
	
	\begin{proof}
		The proof follows the same argumentation as the one in \cite{GstottnerKolarSchoberl:2022}. Let $y^j=\varphi^j(x,u)$ be a flat output of \eqref{eq:sysEq}, $r_{2,1},\; r_{2,2}>0$ be unique minimal integers of \eqref{eq:flat_param_forward} and $\rho_{1},\, \rho_2 > 0$ be the relative degrees of the components of this flat output.
		After applying the invertible input transformation $\bar{u}^1=\varphi^1_{[\rho_1]}(x, u),\; \bar{u}^2=u^2$ (permute $u^1$ and $u^2$ if necessary), the forward-shifts of the components $y^1,\, y^2$ of the flat output up to the orders $r_{2,1},\; r_{2,2}$ have the form\footnote{If $\rho_j=0$, then the functions $\varphi^j_{[0,\rho_j-1]}$ in \eqref{eq:proof_prop_prolonged} are not present.}
		\begin{equation}\label{eq:proof_prop_prolonged}
        \resizebox{.99 \hsize}{!}{
        $\displaystyle
			\begin{array}{rlrl}
				y^1 \hspace{-2ex}& =\varphi^1(x) & y^2 \hspace{-2ex}&=\varphi^2(x) \\
				& \vdots & & \vdots \\
				y_{[\rho_1-1]}^1 \hspace{-2ex}& =\varphi^1_{[\rho_1-1]}(x) & y_{[\rho_2-1]}^2 \hspace{-2ex}& =\varphi^2_{[\rho_2-1]}(x) \\
				y_{[\rho_1]}^1 \hspace{-2ex}& =\varphi^1_{[\rho_1]}(x,u) =\bar{u}^1 & y_{[\rho_2]}^2 \hspace{-2ex}& =\varphi^2_{[\rho_2]}(x, \bar{u}^1) \\
				y_{[\rho_1+1]}^1 \hspace{-2ex}& =\bar{u}^1_{[1]} & y_{[\rho_2+1]}^2 \hspace{-2ex}& =\varphi^2_{[\rho_2+1]}(x, \bar{u}^1, \bar{u}^1_{[1]}) \\
				& \vdots & & \vdots \\
				y_{[r_{2,1}]}^1 \hspace{-2ex}& =\bar{u}^1_{[r_{2,1}-\rho_1]} & y_{[r_{2,2}]}^2 \hspace{-2ex}& =\varphi^2_{[r_{2,2}]}(x, \bar{u}^1, \bar{u}^1_{[1]},\ldots\\
                &  &  & \hspace{1.2cm}\bar{u}^1_{[r_{2,2}-\rho_2]}, \bar{u}_2)\,.
			\end{array}
        $}
		\end{equation}
        While it is obvious that the forward-shifts of $y^1$ must be of this form, we will also prove it for the forward-shifts of the second component $y^2$. By definition of the relative degree, $\varphi_{[\rho_2]}^2$ depends explicitly on at least one of the inputs. The forward-shifts $\varphi_{\left[\rho_2\right]}^2, \ldots, \varphi_{\left[r_{2,2}-1\right]}^2$ are independent of $\bar{u}^2$ and its forward-shifts due to the following. If $\bar{u}^2$ would occur in $\varphi_{[s]}^2$ with $\rho_2 \leq s<r_{2,2}$, then $\varphi_{[s+1]}^2, \ldots, \varphi_{\left[r_{2,2}\right]}^2$ would depend on forward-shifts of $\bar{u}^2$ and hence be useless for constructing functions of $x$ and $\bar{u}$ only. On the other hand, $\bar{u}^2$ must explicitly occur in $\varphi_{\left[r_{2,2}\right]}^2$, since otherwise $\bar{u}^2$ could not be expressed by $y^1, \ldots, y^1_{[r_{2,1}]}, y^2, \ldots, y_{[r_{2,2}]}^2$.
        Moreover, due to the minimality of $r_{2,1}$ and the explicit dependence of $\varphi^2_{[r_{2,2}]}$ on $\bar{u}^1_{[r_{2,2}-\rho_2]}$, it follows that $r_{2,1}-\rho_1=r_{2,2}-\rho_2$.\\
        Now it is only left to prove that \eqref{eq:proof_prop_prolonged} describes a diffeomorphism. The forward-shifts of a flat output up to an arbitrary order are functionally independent. Thus, we only have to prove that the $n+m+\left(r_{2,1}-\rho_1\right)$ variables $x, \bar{u}^1, \ldots, \bar{u}_{[r_{2,1}-\rho_1]}^1, \bar{u}^2$ can be expressed by the $r_{2,1}+r_{2,2}+m$ forward-shifts $y^1, \ldots, y_{[r_{2,1}]}^1, y^2, \ldots, y_{[r_{2,2}]}^2$, i.e.
        \begin{equation}\label{eq:dim_argument_forward}
			r_{2,1}+r_{2,2}+m=n+m+r_{2,1}-\rho_1 \, \rightarrow \, \rho_1+r_{2,2}=n\,.
		\end{equation}
		This follows immediately since it must be possible to construct exactly $n$ independent functions of $x$ only from \eqref{eq:proof_prop_prolonged}.\\
		The diffeomorphism \eqref{eq:proof_prop_prolonged} is just the inverse of the parameterizing map of the prolonged system
		\begin{equation*}
			\begin{aligned}
				x^{i,+} & =f^i(x, \bar{u}) \\
				\bar{u}^{1,{+}} & =\bar{u}_{[1]}^1 \\
				& \vdots \\
				\bar{u}^{1,{+}}_{[r_{2,1}-\rho_1-1]} & =\bar{u}_{[r_{2,1}-\rho_1]}^1
			\end{aligned}
		\end{equation*}
        with the input $(\bar{u}^1_{[r_{2,1}-\rho_1]},\bar{u}^2)$ and $d=\# R_2-n=r_{2,1}-\rho_1$ from \eqref{eq:dim_argument_forward}. Because the parameterizing map of this prolonged system is a diffeomorphism, it is static feedback linearizable.
	\end{proof}

    \begin{rem}
        We would like to point out that, according to Theorem 12 in \cite{KolarDiwoldSchoberl:2021}, every forward-flat system --- as defined in Definition \ref{def:forward-flatness} --- has a flat output of the form $y^j=\varphi^j(x)$. 
        Therefore, Proposition \ref{prop:prolongation} applies to every two-input forward-flat system, whereas the linearization via prolongations of a suitably chosen input has so far only been proven for $(x,u)$-flat systems in the continuous-time case.
    \end{rem}

    With the following practical example of the well known planar VTOL, for which a forward-flat discrete-time model was derived in \cite{KolarSchrotshamerSchoberl:2025}, we validate the results from Proposition \ref{prop:prolongation}.

    \begin{example}
        The planar VTOL aircraft is a popular example w.r.t. flatness analysis and the calculation of flat outputs in the continuous-time literature, as e.g. in \cite{FliessLevineMartinRouchon:1999}, \cite{SchoeberlRiegerSchlacher:2010}, and \cite{SchoeberlSchlacher:2011}. It has been shown that a flat discrete-time system
        \begin{equation}
			\begin{aligned}
				x^{1,+} & = x^1+T_sx^3 \\
				x^{2,+} & = x^2+T_sx^4 \\
				x^{3,+} & = x^3+T_s\sin(x^5)(\varepsilon(x^6)^2-u^1) \\
				x^{4,+} & = x^4+T_s\cos(x^5)(-\varepsilon(x^6)^2+u^1)-g T_s \\
				x^{5,+} & = x^5+T_sx^6 \\
				x^{6,+} & = x^6+T_su^2
			\end{aligned}
			\label{eq:sys_vtol}
		\end{equation}
        with $n=6$, $m=2$ can be obtained by combining an Euler discretization for some sampling time $T_{s}>0$ with a suitable prior state transformation. 
        The discrete-time system \eqref{eq:sys_vtol} is forward-flat with an $x$-flat output $y=(x^{1},x^{2})$ because the sequence of distributions derived according to Algorithm 1 of \cite{KolarDiwoldSchoberl:2023} satisfies Theorem 4 of \cite{KolarDiwoldSchoberl:2023}.\\
		The forward-shifts of the components of the flat output $y^j=\varphi^j(x)=(x^1,x^2)$ up to the orders $r_{2,1},\; r_{2,2}$ are as follows
        \begin{equation*}\label{eq:vtol_y}
        \resizebox{1.00 \hsize}{!}{
        $\displaystyle
			\begin{array}{rlrl}
				y^1\hspace{-2ex}&=x^1 & y^2\hspace{-2ex} & =x^2 \\
				y_{[1]}^1\hspace{-2ex} &=x^1+T_sx^3 & y_{[1]}^2\hspace{-2ex} &=x^2+T_sx^4 \\
				y_{[2]}^1\hspace{-2ex} &=\bar{u}^1 & y_{[2]}^2\hspace{-2ex} &=\varphi^2_{[2]}(x^1,x^2,x^3,x^4,x^5,\bar{u}^1) \\
				y_{[3]}^1\hspace{-2ex} &=\bar{u}^1_{[1]} & y_{[3]}^2\hspace{-2ex} &=\varphi^2_{[3]}(x^1,x^2,x^3,x^4,x^5,x^6,\bar{u}^1,\bar{u}^1_{[1]}) \\
				y_{[4]}^1\hspace{-2ex} &=\bar{u}^1_{[2]} & y_{[4]}^2\hspace{-2ex} &=\varphi^2_{[4]}(x^1,x^2,x^3,x^4,x^5,x^6,\bar{u}^1,\bar{u}^1_{[1]},\\
                &&&\hspace{4,1cm}\bar{u}^1_{[2]},\bar{u}^2) 
			\end{array}
            $}
		\end{equation*}
		with $\bar{u}^1=\varphi^1_{[2]}(x^1,x^3,x^5,x^6,u^1)$. The parameterization of the system variables is of the form $x^i=F_x^i(y^1_{[0,3]},y^2_{[0,3]})$, $u^j=F_u^j(y^1_{[0,4]},y^2_{[0,4]})$ and the difference $d$ is given by $d=\# R_2-n=r_{2,1}-\rho_1=2$.\\
		According to Proposition \ref{prop:prolongation}, the system \eqref{eq:sys_vtol} together with the invertible input transformation $\bar{u}^1=\varphi^1_{[2]}(x^1,x^3,x^5,x^6,u^1),\,\bar{u}^2=u^2$ and a $d=2$-fold prolongation, i.e.
		\begin{equation*}\label{eq:ext_vtol}
			\begin{array}{rl}
				x^{1,+}\hspace{-2ex} & = x^1+T_sx^3 \\
				x^{2,+}\hspace{-2ex} &= x^2+T_sx^4 \\
				x^{3,+}\hspace{-2ex} &=\frac{-T_s x^3 - x^1 + \bar{u}^1}{T_s} \\
				x^{4,+}\hspace{-2ex} &= \frac{(-2 T_s x^3 - x^1 + \bar{u}^1)\cot(x^5) - T_s^2 g +T_s x^4}{T_s} \\
				x^{5,+}\hspace{-2ex} &= x^5+T_sx^6 \\
				x^{6,+}\hspace{-2ex} &= x^6+T_s\bar{u}^2 \\
				\bar{u}^{1,+}\hspace{-2ex} & = \bar{u}^1_{[1]} \\
				\bar{u}^{1,+}_{[1]}\hspace{-2ex} & = \bar{u}^1_{[2]}
			\end{array}
		\end{equation*}
		with the $n=6+2=8$-dimensional state $[x^1,\ldots,x^6,\bar{u}^1,\bar{u}^1_{[1]}]^T$ and new input $(\bar{u}^1_{[2]},\bar{u}^2)$ is static feedback linearizable. This can be verified using the static feedback linearization test in \cite{Grizzle:1986}.
	\end{example}

	\subsection{Linearization of two-input Backward-Flat Systems}
    In this section, we prove that backward-flat systems according to Definition \ref{def:backward-flatness} with $m=2$ and an $(x,u)$-flat output become exactly linearizable by static feedback via prelongations of a suitably chosen function of the system variables $x$ and $u$.
	\begin{prop}\label{prop:prelongation}
		Every two-input backward-flat system \eqref{eq:sysEq} with a backward-flat output of the form $y^j=\varphi^j(x,u)$ and $\Rank (\partial_u \varphi^j)=m$ can be rendered static feedback linearizable by a $d$-fold ($d=\# R_1-n$) prelongation
		\begin{equation*}
			\begin{aligned}
				\Sigma^{(d)}: \bar{\zeta}^{1,{+}}_{[-d]} & =\bar{\zeta}^{1}_{[-d+1]} \\
				& \vdots \\
				\bar{\zeta}^{1,+}_{[-1]} & =\bar{g}^1(x,u)\\
				x^{i,+} & =f^i(x, u)
			\end{aligned}
		\end{equation*}
		with the state $[\bar{\zeta}^{1}_{[-d]}, \ldots, \bar{\zeta}^{1}_{[-1]}, x^1, \ldots, x^n]^{T}$, the input $(u^1, u^2)$ and an invertible transformation $\zeta_{[-1]}=\hat{\Phi}_{\zeta}(x,\bar{\zeta}_{[-1]})$.
	\end{prop}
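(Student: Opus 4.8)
The plan is to mirror the proof of Proposition~\ref{prop:prolongation}, exchanging the forward-shift operator $\delta$ for the backward-shift operator $\delta^{-1}$ and letting the complementary variables $\zeta$ of \eqref{eq:sysEq_ext}--\eqref{eq:sysEq_ext_inv} play the role that the inputs play in the forward case. This is the natural dual, because backward-flatness of \eqref{eq:sysEq} with an $(x,u)$-flat output is exactly forward-flatness of the time-reversed recursion \eqref{eq:sysEq_ext_inv}, in which $\zeta_{[-1]}$ acts as the input and a prelongation becomes an ordinary prolongation. In this dictionary the hypothesis $\Rank(\partial_{u}\varphi)=m$ takes over the role that a purely state-dependent output $\varphi(x)$ plays in the forward setting (cf.\ the remark after Proposition~\ref{prop:prolongation}): it guarantees that the two output components already resolve the full input at shift $0$, so that upon back-shifting the two complementary directions can be separated and peeled off one at a time.

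First I would use $\Rank(\partial_{u}\varphi)=m$ to fix an invertible transformation $\zeta_{[-1]}=\hat\Phi_{\zeta}(x,\bar\zeta_{[-1]})$ that splits the complementary coordinate into the chain feed $\bar\zeta^{1}_{[-1]}=\delta^{-1}\bar g^{1}(x,u)$ and a second coordinate $\bar\zeta^{2}_{[-1]}$, chosen so that the back-shifts of the flat output up to the minimal orders $r_{1,1},r_{1,2}$ of \eqref{eq:flat_param_back} reproduce the exact backward analogue of the triangular array \eqref{eq:proof_prop_prolonged}. Reading increasing backward depth downward, the deep back-shifts of $y^{1}$ collapse to the pure chain $\bar\zeta^{1}_{[-1]},\dots,\bar\zeta^{1}_{[-d]}$ (the dual of $y^{1}_{[\rho_{1}]}=\bar u^{1},\,y^{1}_{[\rho_{1}+1]}=\bar u^{1}_{[1]},\dots$), whereas the back-shifts of $y^{2}$ accumulate dependence on $\bar\zeta^{1}$ and its back-shifts, with the second coordinate $\bar\zeta^{2}_{[-1]}$ (equivalently $u^{2}$) entering only in the deepest shift $y^{2}_{[-r_{1,2}]}$. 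The decisive structural step is the backward run of the argument used for $y^{2}$ in Proposition~\ref{prop:prolongation}: if $\bar\zeta^{2}_{[-1]}$ appeared in some $y^{2}_{[-s]}$ with $s<r_{1,2}$, then all deeper shifts would inherit further back-shifts of $\bar\zeta^{2}$ and could no longer serve to build functions of $x,u$ and $\bar\zeta^{1}$ alone, while minimality of $r_{1,1}$ forces the two chains to have equal length.

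Next I would carry out the dimension bookkeeping dual to \eqref{eq:dim_argument_forward}: the $n+m+d$ quantities $\bar\zeta^{1}_{[-d]},\dots,\bar\zeta^{1}_{[-1]},x,u^{1},u^{2}$ must be in one-to-one correspondence with the $r_{1,1}+r_{1,2}+m$ back-shifts $y^{1},\dots,y^{1}_{[-r_{1,1}]},y^{2},\dots,y^{2}_{[-r_{1,2}]}$, which yields $d=r_{1,1}+r_{1,2}-n=\#R_{1}-n$. Since the back-shifts of a flat output are functionally independent, this correspondence is a diffeomorphism, and by construction it is the inverse of the parameterizing map of the prelonged system $\Sigma^{(d)}$. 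Hence $y$ is a linearizing output of $\Sigma^{(d)}$, and as noted after Definition~\ref{def:backward-flatness} a flat output whose parameterizing map is a diffeomorphism renders the system static feedback linearizable, which completes the argument. I expect the main obstacle to be exactly the triangular structure described above: unlike the forward case, where the output may be taken free of $u$ so that the relative degrees are transparent, here the output depends on $u$ with full rank, and verifying that $\hat\Phi_{\zeta}$ can be chosen so that $\bar g^{1}$ is a genuine function of $(x,u)$ feeding one clean chain while $\bar\zeta^{2}_{[-1]}$ is delayed to the deepest back-shift of $y^{2}$ is precisely where the content of the hypothesis $\Rank(\partial_{u}\varphi)=m$ is consumed.
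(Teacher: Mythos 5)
Your overall route is the same as the paper's: introduce the depth $\gamma_j$ at which $\zeta_{[-1]}$ first enters the back-shifts, transform $\bar{\zeta}^1_{[-1]}=\varphi^1_{[-\gamma_1]}(\zeta_{[-1]},x)$ so that the deep back-shifts of $y^1$ become a pure chain, run the ``uselessness'' argument on $y^2$, count dimensions to get $d=\# R_1-n$, and identify the resulting triangular map as the inverse of the parameterizing map of the prelonged system. The duality framing is a pleasant motivation but does not change the mechanics.

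There is, however, one concrete misstatement. You assert that $\bar{\zeta}^{2}_{[-1]}$ ``(equivalently $u^{2}$)'' enters in the deepest shift $y^{2}_{[-r_{1,2}]}$. It must not enter anywhere. The correct backward analogue of ``$\bar{u}^2$ appears only in $y^2_{[r_{2,2}]}$'' is not ``$\bar{\zeta}^2_{[-1]}$ appears only in $y^2_{[-r_{1,2}]}$'' but rather ``$u^2$ appears already in $y^2$ itself'', which is exactly what the hypothesis $\Rank(\partial_u\varphi)=m$ delivers: by \eqref{eq:sysEq_ext_inv} all back-shifts of order $\geq 1$ are independent of $u$, so the input can only be recovered from the shift-$0$ components, and $\bar{\zeta}^2_{[-1]}$ is simply not needed. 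If $\bar{\zeta}^2_{[-1]}$ did appear in $y^2_{[-r_{1,2}]}$, your own counting step would fail: the $r_{1,1}+r_{1,2}+m$ back-shifts would then be functions of the $n+m+d+1$ variables $\bar{\zeta}^2_{[-1]},\bar{\zeta}^1_{[-d]},\ldots,\bar{\zeta}^1_{[-1]},x,u$ and could not be in one-to-one correspondence with the $n+m+d$ quantities you list; equivalently, $y$ would not be a linearizing output of a prelonged system whose extended state contains only the $\bar{\zeta}^1$-chain. Accordingly, the paper's exclusion argument is run for all $\gamma_2\leq s\leq r_{1,2}$ (including $s=r_{1,2}$), and in \eqref{eq:proof_prop_prelonged} the deepest shift $y^2_{[-r_{1,2}]}$ depends only on $\bar{\zeta}^1_{[-r_{1,2}+\gamma_2-1]},\ldots,\bar{\zeta}^1_{[-1]}$ and $x$. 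With that correction your argument coincides with the paper's proof.
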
 

	\begin{proof}
		Let $y^j=\varphi^j(x,u)$ be a flat output of \eqref{eq:sysEq}, $r_{1,1}\,,\,r_{1,2}>0$ be unique minimal integers of \eqref{eq:flat_param_back} and $\gamma_1,\,\gamma_2>0$ be unique integers such that $y^j_{[-\gamma_j]}$ are the minimal order backward-shifts where $\zeta^j_{[-1]}$ appear for the first time, i.e. $y_{[-\gamma_j]}^j=\varphi^j_{[-\gamma_j]}(\zeta_{[-1]},x)$.
		After applying the invertible transformation $\bar{\zeta}^1_{[-1]}=\varphi^1_{[-\gamma_1]}(\zeta_{[-1]},x),\; \bar{\zeta}^2_{[-1]}=\zeta^2_{[-1]}$ (permute $\zeta^1$ and $\zeta^2$ if necessary), the backward-shifts of the components $y^1,\, y^2$ of the flat output up to the orders $r_{1,1},\, r_{1,2}$ have the form
        \begin{equation}\label{eq:proof_prop_prelonged}
        \resizebox{.99 \hsize}{!}{
        $\displaystyle
			\begin{array}{rlrl}
				y_{[-r_{1,1}]}^1 \hspace{-2ex}& =\bar{\zeta}^1_{[-r_{1,1}+\gamma_1-1]} & y_{[-r_{1,2}]}^2 \hspace{-2ex}&=\varphi^2_{[-r_{1,2}]}(\bar{\zeta}^1_{[-r_{1,2}+\gamma_2-1]}, \\
                & & & \hspace{1.3cm}\ldots,\bar{\zeta}^1_{[-1]},x) \\
				& \vdots & & \vdots \\
				y_{[-\gamma_1]}^1 \hspace{-2ex}& =\varphi^1_{[-\gamma_1]}(\zeta_{[-1]},x) & y_{[-\gamma_2]}^2 \hspace{-2ex}& =\varphi^2_{[-\gamma_2]}(\bar{\zeta}^1_{[-1]},x) \\
                & =\bar{\zeta}^1_{[-1]} & & \\
				y_{[-\gamma_1+1]}^1 \hspace{-2ex}& =\varphi^1_{[-\gamma_1+1]}(x) & y_{[-\gamma_2+1]}^2 \hspace{-2ex}& =\varphi^2_{[-\gamma_2+1]}(x) \\
				& \vdots & & \vdots \\
				y_{[-1]}^1 \hspace{-2ex}& =\varphi^1_{[-1]}(x) & y_{[-1]}^2 \hspace{-2ex}& =\varphi^2_{[-1]}(x)\\
				y^1 \hspace{-2ex}& =\varphi^1(x,u) & y^2 \hspace{-2ex}& =\varphi^2(x, u) \,.
			\end{array}
        $}
		\end{equation}
		First, because of \eqref{eq:sysEq_ext_inv}, the backward-shifts $\varphi_{[-r_{1,j}]}^j, \ldots, \varphi^j_{[-1]}$ are independent of $u$.
		On the other hand, $\partial_{u} \varphi^j=m$ must hold, since otherwise $u^1,\,u^2$ could not be expressed by $y^1_{\left[-r_{1,1}\right]}, \ldots, y^1, y^2_{[-r_{1,2}]}, \ldots, y^2$.
		The backward-shifts $\varphi^2_{\left[-r_{1,2}\right]}, \ldots, \varphi^2_{\left[-\gamma_2\right]}$ are independent of $\bar{\zeta}^2_{[-1]}$ and its backward-shifts due to the following. By definition $\varphi_{[-\gamma_2]}^2$ depends explicitly on at least one of the backward-shifts $\zeta^j_{[-1]}$. If $\bar{\zeta}^2_{[-1]}$ would occur in $\varphi_{[-s]}^2$ with $\gamma_2 \leq s \leq r_{1,2}$, then $\varphi_{[-s-1]}^2, \ldots, \varphi_{\left[-r_{1,2}\right]}^2$ would depend on backward-shifts of $\bar{\zeta}^2_{[-1]}$ and hence be useless for constructing functions of $x$ only.
        Again, due to the minimality of $r_{1,1}$ and the explicit dependence of $\varphi^2_{[-r_{1,2}]}$ on $\bar{\zeta}^1_{[-r_{1,2}+\gamma_2-1]}$, it follows that $r_{1,1}-\gamma_1=r_{1,2}-\gamma_2$.\\
        Next, we prove that \eqref{eq:proof_prop_prelonged} describes a diffeomorphism. With the backward-shifts of a flat output up to an arbitrary order being functionally independent, we only have to prove that the $n+m+r_{1,1}-\gamma_1+1$ variables $\bar{\zeta}^1_{[-r_{1,1}+\gamma_1-1,-1]}, x, u$ can be expressed by the $r_{1,1}+r_{1,2}+m$ backward-shifts $y^1_{[-r_{1,1}]}, \ldots, y^1, y^2_{[-r_{1,2}]}, \ldots, y^2$, i.e.
        \begin{equation}\label{eq:dim_argument_backward}
        \resizebox{.99 \hsize}{!}{
        $\displaystyle
			r_{1,1}+r_{1,2}+m=n+m+r_{1,1}-\gamma_1+1 \, \rightarrow \, r_{1,2}+\gamma_1-1=n\,.
        $}
		\end{equation}
		This follows immediately since it must be possible to construct exactly $n$ independent functions of $x$ only from \eqref{eq:proof_prop_prelonged}.\\
        Again, the diffeomorphism \eqref{eq:proof_prop_prelonged} is the inverse of the parameterizing map of the prelonged system
		\begin{equation*}
			\begin{aligned}
				\bar{\zeta}^{1,{+}}_{[-r_{1,1}+\gamma_1-1]} & =\bar{\zeta}^{1}_{[-r_{1,1}+\gamma_1]} \\
				& \vdots \\
				\bar{\zeta}^{1,+}_{[-1]} & =\bar{g}^1(x,u)\\
				x^{i,+} & =f^i(x, u)
			\end{aligned}
		\end{equation*}
		with $\bar{g}^1(x,u)=\varphi^1_{[-\gamma_1+1]}(x,u)$ and $d=\# R_1-n=r_{1,1}-\gamma_1+1$ from \eqref{eq:dim_argument_backward}. Therefore, the prelonged system is static feedback linearizable, because the corresponding parameterizing map is also a diffeomorphism.
	\end{proof}

    Next, we illustrate the results of Proposition \ref{prop:prelongation} by applying it to an academic example.
	\begin{example}
		Consider the system
		\begin{equation}
			\begin{aligned}
				x^{1,+} & =x^1+x^4\\
				x^{2,+} & =x^2+u^2\\
				x^{3,+} & =x^3+x^4 u^2\\
				x^{4,+} & =u^1\\
				x^{5,+} & =u^2
			\end{aligned}
			\label{eq:sysEq_academic}
		\end{equation}
		with $n=5$, $m=2$, which is neither forward-flat nor static feedback linearizable. By applying map \eqref{eq:sysEq_ext} with $g^1(x,u)=x^1,\,g^2(x,u)=x^5$, the backward-shifts of the components of the flat output $y^j=\varphi^j(x,u)=(x^1+x^4+u^1,x^3+x^4u^2-x^2u^1-u^1u^2)$ up to the orders $r_{1,1},\; r_{1,2}$ follow as
        \begin{equation*}\label{eq:academic_y}
        \resizebox{0.99 \hsize}{!}{
        $\displaystyle
			\begin{array}{rlrl}
				y_{[-4]}^1\hspace{-2ex} &=\bar{\zeta}^1_{[-2]} & y_{[-3]}^2\hspace{-2ex} & =x^3-x^1x^5+\bar{\zeta}^1_{[-1]}(2x^5 \\
				&&&\quad-x^2)+\bar{\zeta}^1_{[-2]}(x^2-x^5) \\
                y_{[-3]}^1\hspace{-2ex} &=\bar{\zeta}^1_{[-1]} & y_{[-2]}^2\hspace{-2ex} &=x^3-x^2(x^1-\bar{\zeta}^1_{[-1]}) \\                
				y_{[-2]}^1\hspace{-2ex} &=x^1 & y_{[-1]}^2\hspace{-2ex} &=x^3-x^2x^4 \\
				y_{[-1]}^1\hspace{-2ex} &=x^1+x^4 & y^2\hspace{-2ex} &=x^3+x^4u^2-x^2u^1-u^1u^2\\
				y^1 \hspace{-2ex} &=x^1+x^4+u^1 
			\end{array}
            $}
		\end{equation*}
        with $\bar{\zeta}^1_{[-1]}=\zeta^1_{[-1]}$ and $\bar{g}^1(x,u)=g^1(x,u)=x^1$.
		The parameterization of the system variables is of the form $x^i=F_x^i(y^1_{[-4,-1]},y^2_{[-3,-1]})$, $u^j=F_u^j(y^1_{[-4,0]},y^2_{[-3,0]})$ and the difference $d$ is given by $d=\# R_1-n=r_{1,1}-\gamma_1+1=2$.
        Since the (backward)-flat output and the parameterizing map are of the form \eqref{eq:backward-flat_output} and \eqref{eq:flat_param_back}, respectively, the system \eqref{eq:sysEq_academic} belongs to the class of backward-flat systems as defined in Definition \ref{def:backward-flatness}.
        With the given (backward)-flat output that only depends on $x$, $u$ (and backward-shifts $\zeta_{[-Q_1,-1]}$) and because the parameterization is of the form \eqref{eq:flat_param_back}, the system \eqref{eq:sysEq_academic} indeed belongs to the class of backward-flat systems. \\
		Now, the system \eqref{eq:sysEq_academic} together with the invertible transformation $\bar{\zeta}^1_{[-1]}=\varphi^1_{[-3]}(\zeta^1_{[-1]})=\zeta^1_{[-1]},\,\bar{\zeta}^2_{[-1]}=\zeta^2_{[-1]}$ and a $d=2$-fold prelongation, i.e.
		\begin{equation*}
			\begin{aligned}
                \bar{\zeta}^{1,+}_{[-2]} & =\bar{\zeta}^1_{[-1]}\\
				\bar{\zeta}^{1,+}_{[-1]} & =x^1\\
				x^{1,+} & =x^1+x^4\\
				x^{2,+} & =x^2+u^2\\
				x^{3,+} & =x^3+x^4 u^2\\
				x^{4,+} & =u^1\\
				x^{5,+} & =u^2
			\end{aligned}
			\label{eq:sys_ext4}
		\end{equation*}
		with the $n=5+2=7$-dimensional state $[\bar{\zeta}^1_{[-2]},\bar{\zeta}^1_{[-1]},x^1,\ldots,x^5]^T$ and input $u$ is static feedback linearizable according to Proposition \ref{prop:prelongation}. This can once again be verified using the static feedback linearization test in \cite{Grizzle:1986}.
	\end{example}

	\subsection{Linearization of two-input Flat Systems}
    With these two special cases, we can now prove that flat systems according to Definition \ref{def:flatness} with $m=2$ and an $(x,u)$-flat output become exactly linearizable by static feedback via a combination prolongations of a suitably chosen transformed input and prelongations of a suitably chosen function of the system variables $x$ and $u$.
	\begin{prop}\label{prop:pre_prolongation}
		Every two-input flat system \eqref{eq:sysEq} with a flat output of the form $y^j=\varphi^j(x,u)$ can be rendered static feedback linearizable by a $d_1$-fold prelongation and $d_2$-fold prolongation ($d=d_1+d_2=\# R-n$)
		\begin{equation*}
			\begin{aligned}
				\Sigma^{(d)}: \bar{\zeta}^{1,{+}}_{[-d_1]} & =\bar{\zeta}^{1}_{[-d_1+1]} \\
				& \vdots \\
				\bar{\zeta}^{1,+}_{[-1]} & =\bar{g}^{1}(x,\hat{\Phi}_u(x, \bar{u}))\\
				x^{i,+} &=f^i(x, \hat{\Phi}_u(x, \bar{u})) \\
				\bar{u}^{1,{+}} & =\bar{u}_{[1]}^1 \\
				& \vdots \\
				\bar{u}^{1,{+}}_{[d_2-1]} & =\bar{u}_{[d_2]}^1
			\end{aligned}
		\end{equation*}
		with the state $[ \bar{\zeta}^{1}_{[-d_1]}, \ldots, \bar{\zeta}^{1}_{[-1]}, x^1, \ldots, x^n,\bar{u}^1, \ldots, \bar{u}_{[d_2-1]}^1]^{T}$, the input $(\bar{u}^2, \bar{u}^1_{[d_2]})$ and invertible transformations $\zeta_{[-1]}=\hat{\Phi}_{\zeta}(x,\bar{\zeta}_{[-1]})$ and $u=\hat{\Phi}_u(x,\bar{u})$.
	\end{prop}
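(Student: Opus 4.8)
The plan is to prove Proposition~\ref{prop:pre_prolongation} by running the arguments of Propositions~\ref{prop:prolongation} and~\ref{prop:prelongation} simultaneously on a common flat output $y^j=\varphi^j(x,u)$. Since the system is flat but, in the generic situation treated here, neither forward- nor backward-flat, Proposition~\ref{prop:differentials} together with the discussion following Corollary~\ref{cor:flatness_rank} guarantees that both $M_1>0$ and $M_2>0$, so that the submatrices $\partial_{y_{[-R_1]}}F_x$ and $\partial_{y_{[R_2]}}F_u$ of the Jacobian \eqref{eq:Jacobian1} are simultaneously rank-deficient. The key idea is that a prolongation removes the forward rank defect of $\partial_{y_{[R_2]}}F_u$ exactly as in Proposition~\ref{prop:prolongation}, while a prelongation removes the backward rank defect of $\partial_{y_{[-R_1]}}F_x$ exactly as in Proposition~\ref{prop:prelongation}; since these defects occupy the disjoint upper-left and lower-right corners of \eqref{eq:Jacobian1}, one expects the two repairs not to interfere.

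Concretely, I would apply the forward input transformation $\bar{u}^1=\varphi^1_{[\rho_1]}(x,u)$, $\bar{u}^2=u^2$ of Proposition~\ref{prop:prolongation} at the input end, and independently the backward transformation $\bar{\zeta}^1_{[-1]}=\varphi^1_{[-\gamma_1]}(\zeta_{[-1]},x)$, $\bar{\zeta}^2_{[-1]}=\zeta^2_{[-1]}$ of Proposition~\ref{prop:prelongation} at the co-input end, choosing the component to be prolonged and the one to be prelonged as the respective first components. Superimposing the two tableaux \eqref{eq:proof_prop_prolonged} and \eqref{eq:proof_prop_prelonged} should then yield a single list of forward- and backward-shifts of $y^1$ and $y^2$ in which the highest forward-shifts form the prolongation chain $\bar{u}^1,\ldots,\bar{u}^1_{[d_2-1]}$ (with $\bar{u}^1_{[d_2]}$ entering as a new input), the lowest backward-shifts form the prelongation chain $\bar{\zeta}^1_{[-1]},\ldots,\bar{\zeta}^1_{[-d_1]}$, and all remaining shifts depend only on $x$, $\bar{u}^2$ and $\bar{u}^1_{[d_2]}$, constituting the static-feedback-linearizable core.

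It then remains to show that this combined map is a diffeomorphism. Because forward- and backward-shifts of a flat output up to any order are functionally independent, this reduces to a dimension count matching the $\#R+m$ shifts $y^1_{[-r_{1,1},r_{2,1}]}$, $y^2_{[-r_{1,2},r_{2,2}]}$ against the $(d_1+n+d_2)+m$ coordinates $\bar{\zeta}^1_{[-d_1,-1]}$, $x$, $\bar{u}^1_{[0,d_2-1]}$, $\bar{u}^2$, $\bar{u}^1_{[d_2]}$ of the extended state and input. Equality of these two dimensions is precisely the statement $d_1+d_2=\#R-n=d$, which I would establish by adapting the counting arguments behind \eqref{eq:dim_argument_forward} and \eqref{eq:dim_argument_backward} to the mixed setting. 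Once this holds, the combined map is the inverse of the parameterizing map of the pre-prolonged system $\Sigma^{(d)}$, which is therefore a diffeomorphism, so that $\Sigma^{(d)}$ is static feedback linearizable.

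The main obstacle I expect lies in the bookkeeping of the two preceding steps. First, one must justify that the forward transformation acting on $u$ and the backward transformation acting on $\zeta_{[-1]}$ are genuinely compatible, so that $\bar{g}^1(x,\hat{\Phi}_u(x,\bar{u}))=\varphi^1_{[-\gamma_1+1]}$ remains a well-defined function of the new variables after the input transformation and the constraints $r_{2,1}-\rho_1=r_{2,2}-\rho_2$ and $r_{1,1}-\gamma_1=r_{1,2}-\gamma_2$ survive in the presence of shifts in both directions. Second --- and this is the real pitfall --- the combined dimension count must not double-count the shared static core: since the $n$ independent functions of $x$ are now drawn from both the forward and the backward shifts, the relations \eqref{eq:dim_argument_forward} and \eqref{eq:dim_argument_backward} cannot both be invoked in their original form, and one must instead derive a single consistent relation yielding $d_1+d_2=\#R-n$. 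The restriction to two inputs is what keeps this tractable, as each Jacobian corner can lose rank by at most one.
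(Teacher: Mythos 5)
Your proposal follows essentially the same route as the paper's proof: apply both the forward input transformation and the backward transformation of $\zeta_{[-1]}$ simultaneously, superimpose the two shift tableaux into the single structure \eqref{eq:proof_prop_flat}, and establish that it is a diffeomorphism via one combined dimension count $\# R+m=r_{1,1}+1-\gamma_1+n+m+r_{2,1}-\rho_1$, which is exactly the single consistent relation \eqref{eq:dim_argument3} you correctly anticipated in place of invoking \eqref{eq:dim_argument_forward} and \eqref{eq:dim_argument_backward} separately. The only detail the paper adds that you do not mention explicitly is that the transferred arguments hold with the relaxation that $\Rank(\partial_u\varphi^j)=m$ is no longer required; otherwise the approaches coincide.
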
 
	
	\begin{proof}
		Let $y^j=\varphi^j(x,u)$ be a flat output of \eqref{eq:sysEq}, $r_{1,1},\,r_{1,2},\,r_{2,1},\,r_{2,2}>0$ be unique minimal integers of \eqref{eq:flat_param}, $\rho_{1},\, \rho_2 \ge 0$ be the relative degrees of the components of the flat output and $\gamma_1,\,\gamma_2>0$ be unique minimal integers such that $y_{[-\gamma_j]}^j=\varphi^j_{[-\gamma_j]}(\zeta_{[-1]},x)$.
		After applying the invertible transformation $\bar{\zeta}^1_{[-1]}=\varphi^1_{[-\gamma_1]}(\zeta_{[-1]},x),\; \bar{\zeta}^2_{[-1]}=\zeta^2_{[-1]}$ (permute $\zeta^1$ and $\zeta^2$ if necessary) and the invertible input transformation $\bar{u}^1=\varphi^1_{[\rho_1]}(x, u),\; \bar{u}^2=u^2$ (permute $u^1$ and $u^2$ if necessary) the backward- and forward-shifts of the components $y^1,\, y^2$ of the flat output up to the orders $r_{1,1},\; r_{1,2}$ and $r_{2,1},\; r_{2,2}$ have the form\footnote{If $\rho_j=0$, then the functions $\varphi^j_{[0,\rho_j-1]}$ in \eqref{eq:proof_prop_flat} are not present.}
		
		\begin{equation}\label{eq:proof_prop_flat}
        \resizebox{.99 \hsize}{!}{
        $\displaystyle
			\begin{array}{rlrl}
				y_{[-r_{1,1}]}^1 \hspace{-2ex}& =\bar{\zeta}^1_{[-r_{1,1}+\gamma_1-1]} & y_{[-r_{1,2}]}^2 \hspace{-2ex}&=\varphi^2_{[-r_{1,2}]}(\bar{\zeta}^1_{[-r_{1,2}+\gamma_2-1],} \\
                &&&\hspace{1.6cm} \ldots,\bar{\zeta}^1_{[-1]},x)\\
				& \vdots & & \vdots \\
				y_{[-\gamma_1]}^1 \hspace{-2ex}& =\varphi^1_{[-\gamma_1]}(\zeta_{[-1]},x) & y_{[-\gamma_2]}^2 \hspace{-2ex}& =\varphi^2_{[-\gamma_2]}(\bar{\zeta}^1_{[-1]},x) \\
                &=\bar{\zeta}^1_{[-1]}&&\\
				y_{[-\gamma_1+1]}^1 \hspace{-2ex}& =\varphi^1_{[-\gamma_1+1]}(x) & y_{[-\gamma_2+1]}^2 \hspace{-2ex}& =\varphi^2_{[-\gamma_2+1]}(x) \\
				& \vdots & & \vdots \\
				y^1 \hspace{-2ex}& =\varphi^1(x) & y^2 \hspace{-2ex}& =\varphi^2(x) \\
				& \vdots & & \vdots \\
				y_{[\rho_1-1]}^1 \hspace{-2ex}& =\varphi^1_{[\rho_1-1]}(x) & y_{[\rho_2-1]}^2 \hspace{-2ex}& =\varphi^2_{[\rho_2-1]}(x) \\
				y_{[\rho_1]}^1 \hspace{-2ex}& =\varphi^1_{[\rho_1]}(x,u) =\bar{u}^1 & y_{[\rho_2]}^2 \hspace{-2ex}& =\varphi^2_{[\rho_2]}(x, \bar{u}^1) \\
				& \vdots & & \vdots \\
				y_{[r_{2,1}]}^1 \hspace{-2ex}& =\bar{u}^1_{[r_{2,1}-\rho_1]} & y_{[r_{2,2}]}^2 \hspace{-2ex} & =\varphi^2_{[r_{2,2}]}(x, \bar{u}^1, \bar{u}^1_{[1]},\\
                &&&\hspace{0.1cm} \ldots,\bar{u}^1_{[r_{2,2}-\rho_2]}, \bar{u}_2)\,.
			\end{array}
        $}
		\end{equation}
		The reasoning behind why \eqref{eq:proof_prop_flat} must be of this form can be transferred from the proofs for Propositions \ref{prop:prolongation} and \ref{prop:prelongation}, with the difference that $\Rank (\partial_u\varphi^j)=m$ no longer has to apply. Therefore, $r_{1,1}-\gamma_1=r_{1,2}-\gamma_2$ and $r_{2,1}-\rho_1=r_{2,2}-\rho_2$ hold true as well.\\
		We can again prove that \eqref{eq:proof_prop_flat} describes a diffeomorphism due to the following. As the forward- and backward-shifts of a flat output up to an arbitrary order are functionally independent, we only have to show that the $r_{1,1}+1-\gamma_1+n+m+r_{2,1}-\rho_1$ variables $\bar{\zeta}^1_{[-r_{1,1}+\gamma_1-1,-1]}, x, \bar{u}^1, \bar{u}_{[1,r_{2,1}-\rho_1]}^1, \bar{u}^2$ can be expressed by the $\# R+m$ backward- and forward-shifts $y_{[-r_{1,1},r_{2,1}]}^1, y_{[-r_{1,2},r_{2,2}]}^2$, i.e.
		\begin{equation}\label{eq:dim_argument3}
        \begin{split}
            \# R+m&=r_{1,1}+1-\gamma_1+n+m+r_{2,1}-\rho_1\\
            n&=r_{1,2}+r_{2,2}+\gamma_1+\rho_1-1\,.
        \end{split}
		\end{equation}
		This follows immediately since it must be possible to construct exactly $n$ independent functions of $x$ only from \eqref{eq:proof_prop_flat}.\\
        Since the diffeomorphism \eqref{eq:proof_prop_flat} is just the inverse of the parameterizing map of the extended system
        \begin{equation*}
			\begin{aligned}
				\bar{\zeta}^{1,{+}}_{[-r_{1,1}+\gamma_1-1]} & =\bar{\zeta}^{1}_{[-r_{1,1}+\gamma_1]} \\
				& \vdots \\
				\bar{\zeta}^{1,+}_{[-1]} & =\bar{g}^{1}(x,\hat{\Phi}_u(x, \bar{u}))\\
				x^{i,+} & =f^i(x, \hat{\Phi}_u(x, \bar{u})) \\
				\bar{u}^{1,{+}} & =\bar{u}_{[1]}^1 \\
				& \vdots \\
				\bar{u}^{1,{+}}_{[r_{2,1}-\rho_1-1]} & =\bar{u}_{[r_{2,1}-\rho_1]}^1
			\end{aligned}
		\end{equation*}
        with the input $(\bar{u}^1_{[r_{2,1}-\rho_1]},\bar{u}^2)$ and $d=\# R-n=\overbrace{r_{1,1}+1-\gamma_1}^{d_1}+\overbrace{r_{2,1}-\rho_1}^{d_2}$ from \eqref{eq:dim_argument3}, the system is static feedback linearizable.
	\end{proof}

	\begin{example}
		Consider the practical example studied in \cite{Kaldmae:2021} or \cite{Aranda-BricaireMoog:2008} with the system equations
		\begin{equation}
			\begin{aligned}
				x^{1,+} & =x^1+u^1\cos(u^2)\\
				x^{2,+} & =x^2+u^1\sin(u^2)\\
				x^{3,+} & =x^3+u^2
			\end{aligned}
			\label{eq:sysEq_wheeled_robot}
		\end{equation}
		and $n=3$, $m=2$. According to the tests in \cite{KolarDiwoldSchoberl:2023} and \cite{Grizzle:1986}, the system describing an Euler approximation of the continuous-time model of the wheeled mobile robot, with the sampling time of one time unit, is neither forward-flat nor static feedback linearizable. With the map \eqref{eq:sysEq_ext}, for which we choose $g^1(x,u)=x^3,\,g^2(x,u)=x^1$, the backward- and forward-shifts of the components $y^j=\varphi^j(x,u)=(x^3,x^1\sin(u^2)-x^2\cos(u^2))$ of the flat output up to the orders $r_{1,1},\; r_{1,2},\; r_{2,1},\; r_{2,2}$ are as follows
		\begin{equation*}\label{eq:wheeled_robot_y}
			\begin{array}{rl}
				y_{[-1]}^1\hspace{-2ex} &=\bar{\zeta}^1_{[-1]} \\
				y^1\hspace{-2ex} &=x^3 \\
				y_{[1]}^1\hspace{-2ex} &=\bar{u}^1 \\
				y_{[2]}^1\hspace{-2ex} &=\bar{u}^1_{[1]} \\\\
                y_{[-1]}^2\hspace{-2ex} &=x^1\sin(x^3-\bar{\zeta}^1_{[-1]})-x^2\cos(x^3-\bar{\zeta}^1_{[-1]}) \\
                y^2\hspace{-2ex} &=x^1\sin(\bar{u}^1-x^3)-x^2\cos(\bar{u}^1-x^3) \\
                y_{[1]}^2\hspace{-2ex} &=\sin(\bar{u}^1-\bar{u}^1_{[1]})(-x^1-\bar{u}^2\cos(\bar{u}^1-x^3)) \\
                &\quad +\cos(\bar{u}^1-\bar{u}^1_{[1]})(-x^2-\bar{u}^2\sin(\bar{u}^1-x^3)))
			\end{array}
		\end{equation*}
		with $\bar{u}^1=x^3+u^2$, $\bar{\zeta}^1_{[-1]}=\zeta^1_{[-1]}$ and $\bar{g}^1(x,u)=g^1(x,u)=x^3$. The parameterization of the system equations are of the form $x^i=F_x^i(y^1_{[-1,1]},y^2_{[-1,0]})$, $u^j=F_u^j(y^1_{[-1,2]},y^2_{[-1,1]})$, which results in a difference $d=\# R-n=\overbrace{r_{1,1}+1-\gamma_1}^{d_1}+\overbrace{r_{2,1}-\rho_1}^{d_2}=1+1=2$. Since the parameterizing map and the flat output are of the form \eqref{eq:flat_param} and \eqref{eq:flat_output}, respectively, the system \eqref{eq:sysEq_wheeled_robot} belongs to the class of flat systems as defined in Definition \ref{def:flatness}.\\
        As described in Proposition \ref{prop:pre_prolongation} and verified by the static feedback linearization test in \cite{Grizzle:1986}, the system \eqref{eq:sysEq_wheeled_robot} together with the invertible input transformation $\bar{u}^1=\varphi^1_{[1]}(x^3,u^2)=x^3+u^2,\,\bar{u}^2=u^2$, invertible transformation $\bar{\zeta}^1_{[-1]}=\varphi^1_{[-1]}(\zeta^1_{[-1]})=\zeta^1_{[-1]},\,\bar{\zeta}^2_{[-1]}=\zeta^2_{[-1]}$, $d_1=1$-fold prelongation and $d_2=1$-fold prolongation, i.e.
		\begin{equation*}
			\begin{aligned}
				\bar{\zeta}^{1,+}_{[-1]} & =x^3\\
				x^{1,+} & =x^1+\bar{u}^2\cos(\bar{u}^1-x^3)\\
				x^{2,+} & =x^2+\bar{u}^2\sin(\bar{u}^1-x^3)\\
				x^{3,+} & =\bar{u}^1\\
				\bar{u}^{1,+} & =\bar{u}^{1}_{[1]}
			\end{aligned}
			\label{eq:sys_ext5}
		\end{equation*}
		with the $n=3+1+1=5$-dimensional state $[\bar{\zeta}^1_{[-1]},x^1,\ldots,x^3,\bar{u}^1]^T$ and the input $(\bar{u}^1_{[1]},\bar{u}^2)$ is static feedback linearizable.
	\end{example}


\bibliographystyle{IEEEtran}
\bibliography{IEEEabrv,bibliography}

\appendix \label{sec:appendix}


    \begin{prop}\label{prop:ranks_matrices}
        The ranks of the matrices $\partial_{y_{[-R_1]}} g\left(F_{x}, F_u\right)$ and $\partial_{y_{[-R_1]}} F_{x}$ coincide.
    \end{prop}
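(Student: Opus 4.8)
The plan is to exploit that $(f,g)$ is by construction a diffeomorphism, so its Jacobian $J=\left[\begin{smallmatrix}\partial_x f & \partial_u f\\ \partial_x g & \partial_u g\end{smallmatrix}\right]$ is a regular $(n+m)\times(n+m)$ matrix, and to connect the two Jacobians in the statement through a single chain-rule identity. Abbreviating $A=\partial_{y_{[-R_1]}}F_x$, $B=\partial_{y_{[-R_1]}}F_u$ and $C=\partial_{y_{[-R_1]}}g(F_x,F_u)$, I would first substitute the parameterization \eqref{eq:flat_param} into the extended map \eqref{eq:sysEq_ext}, which yields $g(F_x,F_u)=F_\zeta$ together with $f(F_x,F_u)=\delta F_x=F_x(y_{[-R_1+1,R_2]})$. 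The second identity is the decisive one: the one-step forward shift $\delta F_x$ no longer contains the lowest backward-shift $y_{[-R_1]}$, so its derivative with respect to $y_{[-R_1]}$ vanishes. Differentiating both identities with respect to $y_{[-R_1]}$ and applying the chain rule then gives the compact relation
\begin{equation*}
\begin{bmatrix}\underline{0}\\ C\end{bmatrix}=J\begin{bmatrix}A\\ B\end{bmatrix}\,.
\end{equation*}

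Since $J$ is regular, left multiplication by $J$ preserves rank, and hence $\Rank(C)=\Rank\!\left(\left[\begin{smallmatrix}A\\ B\end{smallmatrix}\right]\right)$. It then remains to show that stacking $B$ below $A$ does not raise the rank, i.e. $\Rank\!\left(\left[\begin{smallmatrix}A\\ B\end{smallmatrix}\right]\right)=\Rank(A)$, which is equivalent to the kernel inclusion $\ker A\subseteq\ker B$. I would read this off the top block of the displayed identity, namely $(\partial_x f)A+(\partial_u f)B=\underline{0}$: for any $w$ with $Aw=0$ this forces $(\partial_u f)(Bw)=\underline{0}$, and because $\Rank(\partial_u f)=m$ (independent inputs) the $n\times m$ matrix $\partial_u f$ has trivial kernel, so $Bw=0$. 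Thus $\ker A\subseteq\ker B$, and consequently $\Rank(C)=\Rank(A)$.

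The only genuine obstacle is obtaining the zero block in the top row correctly, that is, recognizing that $f(F_x,F_u)$ is precisely the forward shift $\delta F_x$ and therefore independent of the highest backward-shift $y_{[-R_1]}$; everything after that is elementary linear algebra resting on the regularity of $J$ and the injectivity of $\partial_u f$. I would also note that the argument uses only that $(f,g)$ is a diffeomorphism and nowhere a particular choice of the completing functions $g$, so it holds for every admissible $g$, in agreement with the generality of the statement.
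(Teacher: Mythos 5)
Your proposal is correct. The core of your argument coincides with the paper's: you establish the same chain-rule identity, obtain the zero block from the observation that $f(F_x,F_u)=\delta F_x$ cannot depend on $y_{[-R_1]}$, and use the regularity of the Jacobian of the diffeomorphism \eqref{eq:sysEq_ext} to conclude that $\Rank(\partial_{y_{[-R_1]}}g(F_x,F_u))$ equals the rank of the stacked matrix $\bigl[\partial_{y_{[-R_1]}}F_x \,;\, \partial_{y_{[-R_1]}}F_u\bigr]$. Where you genuinely diverge is the final reduction from the stacked matrix to $\partial_{y_{[-R_1]}}F_x$ alone: the paper passes through Lemma \ref{lem:Jacobian_transformed}, normalizing $m$ equations by an input transformation so that $\partial_{y_{[-R_1]}}F_v=\underline{0}$, and then invokes that input transformations leave $F_x$ untouched; you instead read the kernel inclusion $\ker(\partial_{y_{[-R_1]}}F_x)\subseteq\ker(\partial_{y_{[-R_1]}}F_u)$ directly off the top block row $(\partial_x f)\,\partial_{y_{[-R_1]}}F_x+(\partial_u f)\,\partial_{y_{[-R_1]}}F_u=\underline{0}$ together with the full column rank of $\partial_u f$. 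Your route is more self-contained and purely linear-algebraic, avoiding any appeal to the normal form or to the (unstated but true) fact that the input transformation does not alter $F_x$; the paper's route buys consistency with machinery it has already set up and reuses elsewhere. Both are valid, and your closing remark that the argument is independent of the particular completion $g$ is a worthwhile observation that the paper leaves implicit.
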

    \begin{proof}
        Because of the chain rule, the identity 
        \begin{equation*}
            \begin{bmatrix}
                \partial_{y_{[-R_1]}} f(F_x,F_u)\\ \partial_{y_{[-R_1]}} g(F_x,F_u)
            \end{bmatrix}=\left(\partial_{x, u} \begin{bmatrix}
                f\\ g
            \end{bmatrix} \circ F_{x,u}\right) \cdot \begin{bmatrix}
                \partial_{y_{[-R_1]}} F_{x}\\
                \partial_{y_{[-R_1]}} F_{u}
            \end{bmatrix}
        \end{equation*}
        obviously holds, and with $f(F_x,F_u)=\delta(F_x)$ we get
        \begin{equation}\label{eq:rank_matrix}
            \begin{bmatrix}
                \underline{0}\\
                \partial_{y_{[-R_1]}} g(F_x,F_u)
            \end{bmatrix}=\left(\partial_{x, u}
            \begin{bmatrix}
                f\\
                g
            \end{bmatrix} \circ F_{x,u}\right) \cdot 
            \begin{bmatrix}
                \partial_{y_{[-R_1]}} F_{x}\\
                \partial_{y_{[-R_1]}} F_{u}
            \end{bmatrix}\,.
        \end{equation}
        Since the map \eqref{eq:sysEq_ext} is by construction a diffeomorphism, the corresponding Jacobian matrix in \eqref{eq:rank_matrix} is regular, which implies that the matrices $\partial_{y_{[-R_1]}} g\left(F_{x},F_u\right)$ on the left-hand side and $\begin{bmatrix}
            \partial_{y_{[-R_1]}} F_{x} & \partial_{y_{[-R_1]}} F_{u}
        \end{bmatrix}^T$ on the right-hand side have the same rank.
        Furthermore, after applying an invertible input transformation as in Lemma \ref{lem:Jacobian_transformed}, the corresponding Jacobian matrix is of the form \eqref{eq:Jacobian_transformed}.
        Due to the fact that this specific system structure can always be achieved by a regular input transformation (recall that $\Rank (\partial_u f)=m$) and that input transformations do not affect the parameterization $F_x$ of the state variables, it follows that $\Rank (\partial_{y_{[-R_1]}}g(F_x,F_u))=\Rank (\partial_{y_{[-R_1]}}F_x)$.
    \end{proof}

\end{document}